
\documentclass[twoside,reqno]{amsart}

\topmargin -0.20in%
\oddsidemargin 0.20in%
\evensidemargin 0.20in%
\textwidth 16cm%
\textheight 23cm%


\usepackage{amsmath,amssymb,amsthm,amsfonts}

\usepackage{color}

\newtheorem{lemma}{Lemma}[section]
\newtheorem{theorem}{Theorem}[section]

\numberwithin{equation}{section}

\arraycolsep=1.5pt

\newcommand{\dis}{\displaystyle}

\newcommand{\R}{\mathbb{R}}

\newcommand{\semiG}{\mathbb{A}}

\newcommand{\comml}{[\![}
\newcommand{\commr}{]\!]}

\newcommand{\FM}{\mathbf{M}}
\newcommand{\FP}{\mathbf{P}}
\newcommand{\FL}{\mathbf{L}}
\newcommand{\FI}{\mathbf{I}}

\newcommand{\Fu}{\mathbf{u}}

\newcommand{\CD}{\mathcal{D}}
\newcommand{\CE}{\mathcal{E}}

\newcommand{\CT}{\mathcal{T}}

\newcommand{\na}{\nabla}

\newcommand{\al}{\alpha}
\newcommand{\be}{\beta}
\newcommand{\ga}{\gamma}

\newcommand{\la}{\lambda}
\newcommand{\de}{\delta}
\newcommand{\si}{\sigma}
\newcommand{\pa}{\partial}
\newcommand{\ka}{\kappa}
\newcommand{\eps}{\epsilon}

\newcommand{\vth}{\vartheta}

\newcommand{\De}{\Delta}
\newcommand{\Ga}{\Gamma}

\newcommand{\lag}{\langle}
\newcommand{\rag}{\rangle}

\newcommand{\trn}{|\!|\!|}

\begin{document}

\title[The Vlasov-Poisson-Landau System]{Global solutions to the Vlasov-Poisson-Landau System}

\author[R.-J. Duan]{Renjun Duan}
\address[RJD]{Department of Mathematics, The Chinese University of Hong Kong,
Shatin, Hong Kong}
\email{rjduan@math.cuhk.edu.hk}

\author[T. Yang]{Tong Yang}
\address[TY]{Department of Mathematics, City University of Hong  Kong,
Kowloon, Hong Kong
}
\email{matyang@cityu.edu.hk}

\author[H.-J. Zhao]{Huijiang Zhao}
\address[HJZ]{School of Mathematics and Statistics, Wuhan University, P.R.~China}
\email{hhjjzhao@hotmail.com}


\keywords{Vlasov-Poisson-Landau system, asymptotic stability, time-velocity weight, a priori estimates}

\subjclass[2010]{35Q20, 35Q83; 35B35, 35B40}



\begin{abstract}
Based on the recent  study on the Vlasov-Poisson-Boltzmann system with general angular cutoff potentials \cite{DYZ,DYZ-s}, we establish in this paper the global existence of  classical solutions to the Cauchy problem of the Vlasov-Poisson-Landau system that includes the Coulomb potential. This then provides a different approach on this topic from the
recent work  \cite{Guo-VPL}.
\end{abstract}

\maketitle
\thispagestyle{empty}

\setcounter{tocdepth}{1}
\tableofcontents

\section{Introduction}

In plasma physics, the binary grazing collision between particles (e.g., electrons and ions) can be  modeled by the Landau operator, cf.~\cite{TrKr}. There
are different approaches in establishing  the mathematical theories on
 the Landau equation, see \cite{AV,DL, Guo-L, Lio,SG,Vi} and references therein.
Recently, Guo \cite{Guo-VPL} made  progress in proving
the global existence of classical solutions
to  the Vlasov-Poisson-Landau (called VPL in the sequel for simplicity) system in a periodic box. Precisely, he successfully constructed global unique solutions to the Cauchy problem for initial data which have small weighted $H^2$ norms, but can have large $H^{N}$ $(N\geq 3)$ norms with high velocity moments, that
includes the most important case of Coulomb potential. This result is highly non-trivial on this important topic. Note that the same approach is used
in \cite{SZ} for the problem in the whole space.

Based on our recent study on the Vlasov-Poisson-Boltzmann system with general angular cutoff potentials \cite{DYZ,DYZ-s}, we establish in this short paper the global existence of classical solutions to the Cauchy problem on the VPL system in the whole space $\R^3$. Hence, it provides an alternative approach for
the study on  this topic compared to \cite{Guo-VPL}. We emphasize that the main motivation of the paper is to clarify how the approach that we developed in \cite{DYZ,DYZ-s} can be applied to  the VPL system so that we will
 not pursue here the optimal regularity and velocity moments on initial data.

Consider the following Cauchy problem for the VPL system which describes the dynamics of electrons with a constant ion background profile in the whole space,
\begin{equation}\label{cp}
    \left\{\begin{split}
     & \pa_t f + \xi \cdot \na_x f + \na_x\phi \cdot \na_\xi f =Q(f,f),\\
&\De_x \phi =\int_{\R^3} f\,d\xi-1,\quad \phi(x)\to 0\ \text{as}\ |x|\to \infty,\\
& f(0,x,\xi)=f_0(x,\xi).
    \end{split}\right.
\end{equation}
Here, $f=f(t,x,\xi)\geq 0$ represents the density distribution function of the particles (e.g., electrons) located at
$x=(x_1,x_2,x_3)\in\R^3$ with velocity
$\xi=(\xi_1,\xi_2,\xi_3)\in\R^3$ at time $t\geq 0$. The potential function $\phi=\phi(t,x)$ generating the
self-consistent electric field $\na_x\phi$  is coupled with
$f(t,x,\xi)$ through the Poisson equation, where the constant ion background profile is normalized to be unit. $Q$ is the Landau collision operator defined by
\begin{eqnarray*}
  Q(f,g)&=&\na_\xi\cdot \left\{\int_{\R^3}B(\xi-\xi')
    [f(\xi')\na_\xi g(\xi)-\na_\xi f(\xi')g(\xi)]\,d\xi'\right\}\\
    &=&\sum_{i,j=1}^3\pa_{i}\int_{\R^3} B^{ij}(\xi-\xi')[f(\xi')\pa_{j}g(\xi)-\pa_{j}f(\xi')g(\xi)]\,d\xi'
\end{eqnarray*}
with
\begin{equation*}
    B^{ij}(\xi)=\left(\de_{ij}-\frac{\xi_i\xi_j}{|\xi|^2}\right)|\xi|^{\ga+2},\quad -3\leq \ga<-2.
\end{equation*}
Here and in the sequel, we use $\pa_i=\pa_{\xi_i}$ for brevity. Note that
 $\ga=-3$ corresponds to the Coulomb potential.

Let $\FM=(2\pi )^{-3/2} e^{-|\xi|^2/2}$ be the normalized  Maxwellian.
By setting
$
f(t,x,\xi)-\FM = \FM^{1/2} u(t,x,\xi),
$
the Cauchy problem \eqref{cp} becomes
\begin{equation}\label{rcp}
    \left\{\begin{split}
    & \pa_t u + \xi \cdot \na_x u+ \na_x\phi \cdot \na_\xi u -\frac{1}{2} \xi \cdot \na_x\phi u-\na_x \phi \cdot \xi \FM^{1/2} +\FL u= \Ga(u,u),\\
& \De_x \phi=\int_{\R^3} \FM^{1/2} u \,d\xi, \quad \phi(x)\to 0\ \text{as}\ |x|\to \infty,\\
& u(0,x,\xi)=u_0(x,\xi)=\FM^{-1/2} (f_0-\FM),
    \end{split}\right.
\end{equation}
where
\begin{eqnarray*}
&\FL u =-{\FM^{-\frac 12}} \left\{Q\left(\FM, \FM^{1/2} u\right)+ Q\left(\FM^{1/2} u , \FM\right)\right\},\ \
\Ga(u,u)={\FM^{-\frac 12}} Q\left(\FM^{1/2}u,\FM^{1/2}u\right)
\end{eqnarray*}
are the linearized and nonlinear Landau collision  terms, respectively.

In order to state the global existence of solutions to \eqref{rcp}, we need
the following notations. The Landau collision frequency is given by
\begin{equation*}
    \si^{ij}(\xi)=B^{ij}\ast \FM (\xi)=\int_{\R^3} B^{ij}(\xi-\xi')\FM(\xi')\,d\xi'.
\end{equation*}
Similar to \cite{DYZ,DYZ-s}, we introduce the time-velocity weight
corresponding to the Landau operator:
\begin{equation}\label{weight}
    w_{\tau,\la}(t,\xi)=\lag \xi \rag^{{(\ga+2)\tau}}e^{\frac{\la \lag \xi \rag^2}{(1+t)^{\vth}}}, \quad \lag \xi \rag=\sqrt{1+|\xi|^2},\ \tau\in \R,\ 0\leq \la\ll 1,\ \vth>0.
\end{equation}
Note that $w_{\tau,\la}$ depends also on the parameter
$\vth$.  As in \cite{Guo-L} and \cite{SG}, we define the energy norm and the corresponding dissipation rate norm, respectively, by
\begin{equation*}
    |u(x)|_{\tau,\la}^2=\int_{\R^3}w_{\tau,\la}^2(t,\xi)|u|^2\,d\xi,\quad \|u\|_{\tau,\la}^2=\int_{\R^3}|u(x)|_{\tau,\la}^2\,dx,
\end{equation*}
and
\begin{equation*}
    |u(x)|_{\si,\tau,\la}^2=\sum_{i,j=1}^3\int_{\R^3}w_\tau^2(t,\xi)\left\{\si^{ij}
    \pa_iu\pa_j u+\si^{ij}\frac{\xi_i}{2}\frac{\xi_j}{2}|u|^2\right\}\,d\xi,
    \quad \|u\|_{\si,\tau,\la}^2=\int_{\R^3}|u(x)|_{\si,\tau,\la}^2\,dx.
\end{equation*}
Moreover, for an integer
 $N\geq 0$  and  a constant $\ell\geq N$, we define the energy norm
of a given $u=u(t,x,\xi)$ involving the space-velocity derivatives and the time-weighted energy norm, respectively, by
\begin{eqnarray}
&& \trn u(t) \trn_{N,\ell,\la}^2 =\sum_{|\al|+|\be|\leq N}\left\|\pa_\be^\al u(t)\right\|_{|\be|-\ell,\la}^2+\|\na_x \phi(t)\|_{H^N}^2,\label{def.tri}\\
&& X_{N,\ell,\la}(t)=\sup_{0\leq s\leq t}\trn u(s) \trn_{N,\ell,\la}^2+\sup_{0\leq s\leq t} (1+s)^{\frac{3}{2}}\trn u(s) \trn_{N,\ell-1,\la}^2\nonumber\\
&&\hspace{6cm}+\sup_{0\leq s\leq t} (1+s)^{2(1+\vth)}\|\na_x^2\phi (s)\|_{H^{N-1}}^2.\label{def.triX}
\end{eqnarray}
Here,  $\phi$ is determined by $u$ through
\begin{equation}\label{def.phi}
    \phi(t,x)=-\frac{1}{4\pi |x|}\ast_x\int_{\R^3} \FM^{1/2} u(t,x,\xi) \,d\xi.
\end{equation}
If $\la=0$ or $\tau=0$, then we drop the corresponding parameter in the subscript, for example $w_\tau=w_{\tau,0}$, $\|u\|_{\si}=\|u\|_{\si,0,0}$.

The result of this paper can now be stated as follows. More details will be explained at the end of this section.

\begin{theorem}\label{thm.m}
Let $-3\leq \ga<-2$, $N\geq 8$, $\ell_0>\frac{3}{2}$, $\ell\geq  1+\max\left\{N, \frac{\ell_0}{2}-\frac{1}{\ga+2}\right\}$, $0<\la \ll 1$, and $\vth=\frac{-\ga-4}{6\ga+4}\in\left[\frac{1}{14},\frac{1}{4}\right)$.
Assume that $f_0=\FM +\FM^{1/2}u_0\geq 0$ and
\begin{equation*}
  \iint_{\R^3\times \R^3} \FM^{1/2} u_0\,dxd\xi=0.
\end{equation*}
There exist constants $\eps_0>0$, $C_0>0$
such that if
\begin{equation}\label{thm.ge.1}
   Y_{N,\ell,\la}(0)\equiv \sum_{|\al|+|\be|\leq N}\left\|\pa_\be^\al u_0\right\|_{|\beta|-\ell,\la}+\left\|\left(1+|x|+|\xi|^{-\frac {(\ga+2)\ell_0}{2}}\right)u_0\right\|_{Z_1}\leq \eps_0,
\end{equation}
then the Cauchy problem \eqref{rcp} admits a unique global solution $u(t,x,\xi)$ satisfying $f(t,x,\xi)=\FM+\FM^{1/2}u(t,x,\xi)\geq 0$ and
\begin{equation}\label{thm.ge.2}
   \sup_{t\geq 0}X_{N,\ell,\la}(t)\leq C_0 Y_{N,\ell,\la}(0)^2.
\end{equation}
\end{theorem}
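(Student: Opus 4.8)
The proof follows the standard route for kinetic equations near Maxwellian: local existence, uniform a priori estimates, a continuity argument, and propagation of nonnegativity. Concretely, the plan is (i) to construct a unique local-in-time solution of \eqref{rcp} with $\trn u(t)\trn_{N,\ell,\la}$ and the $Z_1$-type quantity in \eqref{thm.ge.1} propagated on a short interval, which is routine once the nonlinear and Vlasov estimates below are available; (ii) to prove the a priori estimate that there is $\eps_1>0$ so that $\sup_{0\leq s\leq t}X_{N,\ell,\la}(s)\leq \eps_1$ implies
\begin{equation*}
\sup_{0\leq s\leq t}X_{N,\ell,\la}(s)\leq C\bigl(Y_{N,\ell,\la}(0)^2+\sup_{0\leq s\leq t}X_{N,\ell,\la}(s)^{3/2}\bigr);
\end{equation*}
(iii) to choose $\eps_0$ small, close the estimate, and extend the solution to all $t\geq0$ with \eqref{thm.ge.2}; and (iv) to recover $f=\FM+\FM^{1/2}u\geq 0$ from $f_0\geq0$, using that $Q$ vanishes when $f\equiv 0$ and a standard nonnegativity-propagation argument, cf.~\cite{Guo-L,Guo-VPL}. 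Only step (ii) carries the real difficulty.

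\emph{Weighted microscopic estimates.} Apply $\pa_\be^\al$ to the first equation of \eqref{rcp}, take the $L^2_{x,\xi}$ inner product with $w_{|\be|-\ell,\la}^2\pa_\be^\al u$, and sum over $|\al|+|\be|\leq N$. The coercivity of the linearized Landau operator, $\lag\FL g,g\rag\gtrsim|\{\FI-\FP\}g|_\si^2$, together with its weighted counterpart, yields the dissipation $\sum\|w_{|\be|-\ell,\la}\pa_\be^\al\{\FI-\FP\}u\|_\si^2$ modulo controllable errors. The decisive gain comes from $\pa_t$ hitting the weight: since $\pa_t w_{\tau,\la}^2=-\tfrac{2\vth\la\lag\xi\rag^2}{(1+t)^{1+\vth}}\,w_{\tau,\la}^2$, the time derivative of the energy generates the additional dissipative term $\tfrac{\vth\la}{(1+t)^{1+\vth}}\sum\|\lag\xi\rag\,w_{|\be|-\ell,\la}\pa_\be^\al u\|^2$. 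This $\lag\xi\rag^2$-gain, available only at the price of the time-integrable factor $(1+t)^{-(1+\vth)}$, is exactly what absorbs the velocity growth produced by the streaming term $\xi\cdot\na_x u$ and by the commutators $[\pa_\be,\xi\cdot\na_x]$ acting on high-order velocity derivatives, in the regime where the $\si$-norm degenerates (which is precisely the case $\ga$ near $-3$). Matching this gain against the polynomial part $\lag\xi\rag^{(\ga+2)(|\be|-\ell)}$ of the weight and against the commutator structure is what forces the conditions $\ell\geq1+\max\{N,\ell_0/2-1/(\ga+2)\}$ and $\vth=\tfrac{-\ga-4}{6\ga+4}$. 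The Vlasov terms $\na_x\phi\cdot\na_\xi u-\tfrac12\xi\cdot\na_x\phi\,u$ and the nonlinear term $\Ga(u,u)$ are treated by the trilinear Landau estimates of \cite{Guo-L,SG} adapted to $w_{\tau,\la}$, using that $\na_x\phi$, determined by \eqref{def.phi}, is controlled via elliptic regularity and Sobolev embedding by the macroscopic density $\int_{\R^3}\FM^{1/2}u\,d\xi$, hence by $\trn u\trn_{N,\ell,\la}$.

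\emph{Macroscopic estimates and time decay.} Projecting \eqref{rcp} onto $\mathrm{span}\{\FM^{1/2},\xi_i\FM^{1/2},|\xi|^2\FM^{1/2}\}$ gives local conservation laws for the coefficients $(a,b,c)$ of $\FP u$ coupled to $\phi$; the standard interaction-functional argument, as in \cite{Guo-L} and the Vlasov–Poisson–Boltzmann literature, produces dissipation of $\na_x\FP u$ and, through the Poisson equation, of $\na_x\phi$, at the expense of already-controlled microscopic quantities. Adding a small multiple of this to the weighted microscopic estimate yields a Lyapunov inequality
\begin{equation*}
\frac{d}{dt}\CE_{N,\ell,\la}(t)+\CD_{N,\ell,\la}(t)\lesssim\frac{1}{(1+t)^{1+\vth}}\,(\cdots)+(\text{cubic terms}),
\end{equation*}
with $\CE_{N,\ell,\la}\sim\trn u\trn_{N,\ell,\la}^2$. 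To capture the weights $(1+s)^{3/2}$ and $(1+s)^{2(1+\vth)}$ in $X_{N,\ell,\la}$, I would rerun the whole scheme with the weight index $\ell$ replaced by $\ell-1$ and exploit that $\CD_{N,\ell,\la}$ controls a norm one power of $\lag\xi\rag^{\ga+2}$ stronger in velocity than $\CE_{N,\ell-1,\la}$; splitting the velocity integration at $|\xi|\lesssim R$ versus $|\xi|\gtrsim R$ and feeding in the $L^1_x$-type bound supplied by the $Z_1$-norm of $(1+|x|+|\xi|^{-(\ga+2)\ell_0/2})u_0$ gives the linearized $(1+t)^{-3/4}$ rate in $L^2_x$ for the macroscopic part, hence $(1+t)^{-3/2}$ after squaring; the elliptic identity $\|\na_x^2\phi\|_{H^{N-1}}\lesssim\|\,|\na_x\FM^{1/2}u|_{L^2_\xi}\|_{H^{N-1}}$ together with the improved density decay then yields the $(1+s)^{-2(1+\vth)}$ rate for $\na_x^2\phi$. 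Reinserting these decay rates into the cubic terms of the Lyapunov inequality closes $X_{N,\ell,\la}(t)\lesssim Y_{N,\ell,\la}(0)^2$.

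The main obstacle is the microscopic estimate in step (ii) for $\ga$ close to $-3$: the Landau $\si$-dissipation is far too weak to absorb the velocity growth created by the transport operator acting on high-order velocity derivatives, so the whole argument rests on the time-velocity weight $w_{\tau,\la}$ and on choosing $\vth=\tfrac{-\ga-4}{6\ga+4}$ so that the $(1+t)^{-(1+\vth)}\lag\xi\rag^2$ gain is simultaneously strong enough in velocity to beat that growth, integrable in time, and compatible with the polynomial-in-time decay needed to control the Vlasov nonlinearity.
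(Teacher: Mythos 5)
Your overall architecture (weighted energy estimates with the time--velocity weight, extra dissipation of size $(1+t)^{-(1+\vth)}\lag \xi\rag^2$ generated when $\pa_t$ hits $w_{\tau,\la}$, macroscopic dissipation via moment equations, linearized decay fed through a Duhamel/$Z_1$ argument, time-weighted Gronwall for the $(1+s)^{3/2}$ rate) does match the paper's Steps 1--2. But there is a genuine gap at the point the paper itself flags as the main new difficulty: the bound $\sup_s (1+s)^{2(1+\vth)}\|\na_x^2\phi(s)\|_{H^{N-1}}^2\lesssim Y_{N,\ell,\la}(0)^2+\widetilde X^2$, which is not a by-product of the other estimates but is needed a priori to close them (the cubic Vlasov terms in Lemmas \ref{lem.xiu}--\ref{lem.pu} carry a full factor $\lag\xi\rag$ or $\lag\xi\rag^2$ against the weighted norms and can only be absorbed by the last term of $\CD_{N,\ell,\la}$ in \eqref{def.ed} if $\|\na_x^2\phi\|_{H^{N-1}}$ already decays at rate $(1+t)^{-(1+\vth)}$). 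Your proposed route to this decay --- ``elliptic identity plus improved density decay'' --- does not work as stated: $\|\na_x^2\phi\|_{H^{N-1}}\sim\|a\|_{H^{N-1}}$ includes $N-1$ spatial derivatives of $a$, and the semigroup/Duhamel mechanism of Lemma \ref{thm.lide} can only be applied with at most one derivative on the nonlinear source (Lemma \ref{lem.G} stops at $|\al|\leq 1$ precisely because $\Ga(u,u)$ and $\na_x\phi\cdot\na_\xi u$ lose velocity derivatives, which is the very obstruction the paper singles out as the difference from the Vlasov--Poisson--Boltzmann case), while the lower-index energy decay you do obtain, $(1+t)^{-3/2}$ for $\CE_{N,\ell-1,\la}$, is strictly slower than the required $(1+t)^{-2(1+\vth)}$.

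The paper closes this by a device absent from your proposal: a high-order energy functional $\CE_{N,\ell,\la}^{\rm h}(t)$ as in \eqref{def.eh} (microscopic part, $\na_x^2\phi$, and derivatives of $(a,b,c)$ only), satisfying $\frac{d}{dt}\CE^{\rm h}_{N,\ell,\la}+\kappa\CD_{N,\ell,\la}\leq C\|\na_x(a,b,c)\|^2$, combined with the pointwise balance $\min_\xi\{\lag\xi\rag^{\ga+2},\lag\xi\rag^2/(1+t)^{1+\vth}\}=(1+t)^{(1+\vth)(\ga+2)/(-\ga)}$, which converts the degenerate Landau dissipation plus the weight-induced dissipation into $\CD_{N,\ell,\la}\gtrsim(1+t)^{p-1}\CE^{\rm h}_{N,\ell,\la}$ with $p=1+\frac{(1+\vth)(\ga+2)}{-\ga}$; Gronwall with $e^{\kappa(1+t)^p}$ against the source $\|\na_x(a,b,c)\|^2\lesssim(1+t)^{-5/2}$ then gives $\CE^{\rm h}\lesssim(1+t)^{-(3/2+p)}$. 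The specific value $\vth=\frac{-\ga-4}{6\ga+4}$ is exactly the solution of $p=\frac12+2\vth$, i.e.\ $3/2+p=2(1+\vth)$, so that the output rate matches the rate demanded in $X_{N,\ell,\la}$; it is not, as you suggest, dictated by absorbing streaming commutators (those are handled by the weight hierarchy $w^2_{|\be|-\ell,\la}\leq C\lag\xi\rag^{\frac{\ga+2}{2}}w_{|\be|-\ell,\la}\cdot\lag\xi\rag^{\frac{\ga+2}{2}}w_{|\be_2|-\ell,\la}$, exploiting $\ga+2<0$), and the constraint $\ell\geq 1+\max\{N,\frac{\ell_0}{2}-\frac{1}{\ga+2}\}$ comes from the source estimate of Lemma \ref{lem.G} needed for the Duhamel step, not from those commutators. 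Without the functional $\CE^{\rm h}$ and this interpolation-in-time mechanism, your scheme cannot produce the $(1+t)^{-2(1+\vth)}$ decay of $\|\na_x^2\phi\|_{H^{N-1}}^2$, and hence the a priori estimate in your step (ii) does not close.
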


The proof of Theorem \ref{thm.m} is basically
along the same line as  \cite{DYZ-s} for the study on the Vlasov-Poisson-Boltzmann system with angular cutoff soft potentials. We  point out here the main differences from \cite{DYZ-s}. First of all, corresponding to the dissipation property of the linearized Landau operator stated in Lemma \ref{lem.FL} in the
next section, the exponent in the algebraic part of the weight function \eqref{weight} is chosen to be $(\ga+2)\tau$. In this way,
to  compensate one order of derivative in the velocity variable,  the extra velocity moment $\lag \xi\rag^{-(\ga+2)}$  grows slower than $\lag \xi\rag^{2}$ at large velocity when $-3\leq \ga<-2$. This can be used to control
the growth in the  velocity variable when dealing with the weighted estimate on the nonlinear term $\na_x\phi\cdot \na_\xi u$. The technique used here is different from the one in \cite{Guo-VPL}, where the velocity diffusion dissipation from the Landau operator was  used.

Another difference concerns the time-decay estimate on the potential force $\na_x\phi$. In the case of the Vlasov-Poisson-Boltzmann system, the nonlinear term contains at most one order  derivative on the perturbation
in the term  $\na_x\phi\cdot \na_\xi u$, and hence the time-decay estimate on $\|\na_x^2\phi\|^2_{H^{N-1}}\sim \|a\|^2_{H^{N-1}}$, particularly on $\|\na_x^{N-1}a\|^2$, can be obtained in term of the total energy functional $\trn u(t) \trn^2_{N,\ell-1}$. However, the nonlinear Landau operator contains second order of differentiation in the velocity variable. To overcome this, we will use the high order energy functional $\CE_{N,\ell,\la}^{\rm h}(t)$. To obtain the time-decay of $\CE_{N,\ell,\la}^{\rm h}(t)$,  the balance of $\lag \xi\rag^2/(1+t)^{1+\vth}$ and $\lag \xi\rag^{\ga+2}$ with $\ga<-2$ is used to get a time-decay coefficient in the dissipation term. That is,
\begin{equation*}
    \min_{\xi\in\R^3}\left\{\lag \xi\rag^{\ga+2},\frac{\lag \xi\rag^2}{(1+t)^{1+\vth}}\right\}=(1+t)^{\frac{(1+\vth)(\ga+2)}{-\ga}}
\end{equation*}
leads to
\begin{equation*}
    \CD_{N,\ell,q}(t)\geq \kappa (1+t)^{\frac{(1+\vth)(\ga+2)}{-\ga}} \CE_{N,\ell,q}^{\rm h}(t).
\end{equation*}
Note that $\CD_{N,\ell,q}(t)$ and $\CE_{N,\ell,\la}^{\rm h}(t)$ will be defined
in Sections 3 and 4 respectively.
In this way, as in  \cite{SG},   the time-decay on $\CE_{N,\ell,\la}^{\rm h}(t)$
  for a proper choice of $0<\vth\leq 1/4$ in terms of $\gamma$ will be given
in the Step 3 for the proof of Theorem \ref{thm.m} in Section \ref{sec4}.

The rest of this paper is arranged as follows. In the next two sections, we will state
 some lemmas related to the basic properties of $\FL$ and $\Ga(\cdot,\cdot)$, and also the weighted estimates on the nonlinear terms. In Section \ref{sec4}, we will give the proof of Theorem \ref{thm.m}.

\medskip
\noindent{\bf Notations.}~
Throughout this paper,  $C$  denotes
some generic positive (generally large) constant and $\ka$ denotes some generic
positive (generally small) constant, where both $C$ and
$\ka$ may take different values in different places.
$A\sim B$ means $\ka A\leq B \leq \frac{1}{\ka} A$. We use $L^2$ to denote the usual Hilbert spaces $L^2=L^2_{x,\xi}$, $L^2_x$ or $L^2_\xi$ with the norm $\|\cdot\|$, and use
$\lag \cdot,\cdot\rag$ to denote the inner product over $L^2_{x,\xi}$ or $L^2_\xi$.
For $q\geq 1$,  the  mixed  velocity-space Lebesgue
space $Z_q=L^2_\xi(L^q_x)=L^2(\R^3_\xi;L^q(\R^3_x))$ is used.
For
multi-indices $\al=(\al_1,\al_2,\al_3)$ and
$\be=(\be_1,\be_2,\be_3)$,  $\pa^{\al}_\be=\pa_x^\al\pa_\xi^\be=\pa_{x_1}^{\al_1}\pa_{x_2}^{\al_2}\pa_{x_3}^{\al_3}
    \pa_{\xi_1}^{\be_1}\pa_{\xi_2}^{\be_2}\pa_{\xi_3}^{\be_3}.
$
The length of $\al$ is $|\al|=\al_1+\al_2+\al_3$ and similar for $|\be|$.

\section{Preliminary}

In this section, we will state two lemmas about some basic properties
of the
Landau operator.
Given a vector-valued function $\Fu=(u_1,u_2,u_3)$, define
\begin{equation*}
   P_\xi \Fu=\frac{\xi \otimes \xi }{|\xi|^2}\Fu=\left\{\frac{\xi}{|\xi|}\cdot \Fu \right\}\frac{\xi}{|\xi|} ,\quad i.e.,\ ({P}_\xi \Fu)_i= \left\{\sum_{j=1}^3 \frac{\xi_j}{|\xi|}u_j\right\}\frac{\xi_i}{|\xi|}.
\end{equation*}
Concerning the equivalent characterization of the dissipation rate and the
dissipative property of the linearized Landau operator, the following lemma
was proved in \cite{Guo-L}.

\begin{lemma}[\cite{Guo-L}]\label{lem.FL}
By using the above notations, we have

\noindent
(i)
\begin{equation*}
    |u|_{\si,\tau,\la}^2 \sim \left|(1+|\xi|)^{\frac{\ga}{2}} {P}_\xi \na_\xi u\right|_{\tau,\la}^2+\left|(1+|\xi|)^{\frac{\ga+2}{2}}\{{I}-{P}_\xi\}\na_\xi u\right|_{\tau,\la}^2+\left|(1+|\xi|)^{\frac{\ga+2}{2}} u\right|_{\tau,\la}^2.
\end{equation*}

\noindent (ii)
$\lag \FL u,v\rag=\lag u,\FL v\rag$, $\lag \FL u,u\rag \geq 0$,
\begin{equation*}
    \ker \FL={\rm span}\left\{\FM^{1/2},\xi_i\FM^{1/2},1\leq i\leq 3,
    |\xi|^2\FM^{1/2} \right\},
\end{equation*}
and, $\lag \FL u,u\rag =0$ if and only if
$
    u=\FP u,
$
where
\begin{eqnarray}
&\dis \FP u= \left\{a(t,x)+b(t,x)\cdot \xi+c(t,x)\left(|\xi|^2-3\right)\right\}\FM^{1/2},\label{macro}\\
&\dis a= \int_{\R^3} \FM^{1/2} u\,d\xi,\
b_i=\int_{\R^3}\xi_i \FM^{1/2}u\,d\xi,\ \ 1\leq i\leq 3,\
c= \frac{1}{6}\int_{\R^3}\left(|\xi|^2-3\right) \FM^{1/2} u\,d\xi.\nonumber
\end{eqnarray}
Moreover, there exists $\kappa_0>0$ such that
\begin{equation*}
    \lag \FL u, u\rag \geq \kappa_0 \left|\{\FI-\FP\} u\right|_\si^2.
\end{equation*}

\end{lemma}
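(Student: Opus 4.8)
�The plan is to prove Lemma \ref{lem.FL} by the standard route for the linearized Landau operator, following \cite{Guo-L}: an explicit computation of the quadratic form $\langle\FL u,u\rangle$ in terms of the collision-frequency matrix $\si^{ij}$, a decomposition of the velocity gradient $\na_\xi u$ into its component along $\xi$ and perpendicular to $\xi$, and a null-space analysis via the $H$-theorem structure of the Landau operator. All the weights $w_{\tau,\la}$ only multiply the integrands pointwise, so the weighted equivalence in (i) will follow from the unweighted one $|u|_\si^2\sim|(1+|\xi|)^{\ga/2}P_\xi\na_\xi u|^2+\ldots$ by inserting $w_{\tau,\la}^2$ everywhere; I will first establish the unweighted statement and then note this remark.

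For part (i), I would start from the definition
\[
  |u|_\si^2=\sum_{i,j}\int_{\R^3}\Big\{\si^{ij}\pa_i u\,\pa_j u+\si^{ij}\tfrac{\xi_i}{2}\tfrac{\xi_j}{2}|u|^2\Big\}\,d\xi,
\]
and use the known pointwise spectral behaviour of the matrix $\si^{ij}(\xi)$ (from \cite{DL,Guo-L}): $\si^{ij}(\xi)$ is symmetric positive definite, its eigenvalue in the direction $\xi/|\xi|$ decays like $(1+|\xi|)^{\ga}$, while the two eigenvalues on the orthogonal complement decay like $(1+|\xi|)^{\ga+2}$. Writing $\na_\xi u=P_\xi\na_\xi u+(I-P_\xi)\na_\xi u$ and using that $P_\xi$ is the orthogonal projection onto $\xi$, the quadratic form $\sum_{ij}\si^{ij}\pa_i u\,\pa_j u$ splits (up to harmless cross terms absorbed by Cauchy--Schwarz) into $\sim(1+|\xi|)^{\ga}|P_\xi\na_\xi u|^2+(1+|\xi|)^{\ga+2}|(I-P_\xi)\na_\xi u|^2$, while $\sum_{ij}\si^{ij}\xi_i\xi_j\sim(1+|\xi|)^{\ga+2}|\xi|^2\sim(1+|\xi|)^{\ga+4}$ is controlled by the term $|(1+|\xi|)^{(\ga+2)/2}u|^2$ after noticing that the factor $|\xi_i/2|$ contributes $|\xi|$. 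Integrating in $\xi$ (and, for the weighted version, inserting $w_{\tau,\la}^2$) gives (i).

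For part (ii): self-adjointness $\langle\FL u,v\rangle=\langle u,\FL v\rangle$ and $\langle\FL u,u\rangle\ge 0$ come from the symmetric bilinear form associated with the Landau operator, written after the standard symmetrization
\[
  \langle\FL u,u\rangle=\tfrac12\iint B^{ij}(\xi-\xi')\FM(\xi)\FM(\xi')\,
  \big(\pa_i-\pa_i'\big)\big(\FM^{-1/2}u\big)\,\big(\pa_j-\pa_j'\big)\big(\FM^{-1/2}u\big)\,d\xi\,d\xi'\ge 0,
\]
using that $B^{ij}$ is nonnegative definite and vanishes in the direction $\xi-\xi'$. Equality holds iff the integrand vanishes for a.e.\ $(\xi,\xi')$, i.e.\ iff $(\pa_i-\pa_i')(\FM^{-1/2}u)$ is parallel to $\xi-\xi'$ for a.e.\ pair; a now-classical argument (the collisional invariants of the Landau operator being exactly those of Boltzmann) shows this forces $\FM^{-1/2}u=$ (polynomial of degree $\le 2$ of the form $\alpha+\be\cdot\xi+c|\xi|^2$), which is exactly $\FP u$, giving the stated kernel spanned by $\FM^{1/2},\xi_i\FM^{1/2},|\xi|^2\FM^{1/2}$. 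Finally, the coercivity estimate $\langle\FL u,u\rangle\ge\kappa_0|\{\FI-\FP\}u|_\si^2$ is obtained by the usual Poincar\'e-type / compactness argument for the linearized collision operator: on the orthogonal complement of $\ker\FL$, $\FL$ has a spectral gap relative to the $|\cdot|_\si$ norm; one writes $\FL=\nu(\xi)\cdot(\text{diffusion part})-K$ with $K$ relatively compact with respect to the $|\cdot|_\si$ norm, so that $\langle\FL u,u\rangle+\|u\|_{L^2_{\text{loc}}}^2\gtrsim|\{\FI-\FP\}u|_\si^2$, and then upgrades the local $L^2$ term away by a contradiction/compactness argument using $\ker\FL=\mathrm{Range}\,\FP$.

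The main obstacle is the kernel characterization and the coercivity in (ii): the delicate step is showing that the vanishing of the symmetrized Landau integrand forces $\FM^{-1/2}u$ to be a degree-$\le2$ polynomial of the special form $a+b\cdot\xi+c|\xi|^2$ (rather than a general quadratic), and then extracting a quantitative spectral gap in the $|\cdot|_\si$ norm with the singular weights $(1+|\xi|)^{\ga}$, $\ga\in[-3,-2)$, appearing in the dissipation. Both are exactly the contents of \cite{Guo-L}, so the cleanest route is to reduce to that reference; I would present the decomposition and the symmetrized-form argument in enough detail to make the logic self-contained and cite \cite{Guo-L,DL} for the precise pointwise bounds on $\si^{ij}$ and for the compactness argument behind $\kappa_0$.
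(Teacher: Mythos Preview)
Your outline is essentially correct and matches the argument in \cite{Guo-L}, which is all the paper does here: the lemma is stated with the attribution ``the following lemma was proved in \cite{Guo-L}'' and no proof is given in the paper itself. So your plan to reproduce the \cite{Guo-L} argument is exactly in line with the paper.

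One small computational slip worth fixing in your part (i): the quantity $\sum_{ij}\si^{ij}(\xi)\xi_i\xi_j$ picks up the \emph{radial} eigenvalue of $\si^{ij}$, not the transverse one, so it behaves like $(1+|\xi|)^{\ga}|\xi|^2\sim(1+|\xi|)^{\ga+2}$, not $(1+|\xi|)^{\ga+2}|\xi|^2\sim(1+|\xi|)^{\ga+4}$ as you wrote. This is precisely why the zeroth-order term in $|u|_{\si,\tau,\la}^2$ matches $|(1+|\xi|)^{(\ga+2)/2}u|_{\tau,\la}^2$ directly, without any extra factor of $|\xi|$ to explain away. Your conclusion is right; just correct the eigenvalue used.
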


The following lemma states the weighted estimate on $\FL u$ and $\Ga(u,u)$. Notice that since $\la\geq 0$ is small enough, the coefficient $\la/(1+t)^{\vth}$ in front of $\lag \xi\rag^2$ in the exponential part of the weight function $w_{\tau,\la}$ is small  uniformly in  $t\geq 0$. Then, the proof of the
following  lemma follows directly from the argument used in \cite{SG}.

\begin{lemma}[\cite{SG}]

\noindent (i)
There exist $\kappa>0$ and  $C>0$,  such that
\begin{equation}
    \left\lag w_{\tau,\la}^2(t,\xi) \FL u, u\right\rag \geq \kappa |u|_{\si,\tau,\la}^2-C\left|\chi_{\{|\xi|\leq 2C\}} u\right|_\tau^2.\label{est.l}
\end{equation}
Let $|\be|>0$, $\tau=|\be|-\ell$ with $\ell\geq 0$. For $\eta>0$ small enough there exists $C_\eta>0$ such that
\begin{equation}\label{est.lv}
    \left\lag w_{\tau,\la}^2(t,\xi) \pa_\be\FL u, \pa_\be u\right\rag \geq \kappa |\pa_\be u|_{\si,\tau,\la}^2-\eta \sum_{|\be'|=|\be|} |\pa_{\be'} u|_{\si,\tau,\la}^2-C_\eta \sum_{|\be'|<|\be|} |\pa_{\be'} u|_{\si,|\be'|-\ell,\la}.
\end{equation}

\noindent (ii)
Let $N\geq 8$, $|\al|+|\be|\leq N$, $\tau=|\be|-\ell$ with $\ell\geq 0$. Then
\begin{eqnarray}
&\dis \left\lag w_{\tau,\la}^2(t,\xi) \pa_\be^\al \Ga(u_1,u_2),\pa_\be^\al u_3\right\rag\nonumber\\
&\dis \leq C \sum_{\substack{|\al'|+|\be'|\leq N  \\
\be''\leq \be'\leq \be }}\left\{\left|\pa_{\be''}^{\al'}u_1\right|_{\tau}\left|\pa_{\be-\be'}^{\al-\al'}u_2\right|_{\si,\tau,\la} +\left|\pa_{\be''}^{\al'} u_1\right|_{\si,\tau}\left|\pa_{\be-\be'}^{\al-\al'}u_2\right|_{\tau,\la}\right\}\left|\pa_\be^\al u_3\right|_{\si,\tau,\la}.\label{est.ga}
\end{eqnarray}

\end{lemma}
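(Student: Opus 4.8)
The argument follows \cite{SG} (and \cite{Guo-L}), and its starting point is the standard splitting of the linearized Landau operator into its diffusive part and an integral remainder,
\[
\FL u = -\pa_i\bigl[\si^{ij}(\pa_j u+\tfrac{\xi_j}{2}u)\bigr]+\tfrac{\xi_i}{2}\si^{ij}(\pa_j u+\tfrac{\xi_j}{2}u)+\mathcal{K}u,
\]
where $\mathcal{K}$ is a bounded operator whose integral kernel, together with its velocity derivatives, decays fast at infinity. For \eqref{est.l} I pair $\FL u$ with $w_{\tau,\la}^2 u$ and integrate by parts in $\xi$ in the first term. Moving one $\pa_i$ off produces $\sum_{ij}\int w_{\tau,\la}^2\bigl(\si^{ij}\pa_i u\,\pa_j u+\si^{ij}\tfrac{\xi_i\xi_j}{4}|u|^2\bigr)$, which by Lemma~\ref{lem.FL}(i) is comparable to $|u|_{\si,\tau,\la}^2$, up to a lower-order correction from the cross terms and up to a commutator term in which $\pa_i$ falls on $w_{\tau,\la}^2$. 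Since
\[
\frac{\pa_i w_{\tau,\la}^2}{w_{\tau,\la}^2}=2\Bigl(\frac{(\ga+2)\tau}{\lag\xi\rag^2}+\frac{2\la}{(1+t)^{\vth}}\Bigr)\xi_i,
\]
the algebraic part of this commutator carries the extra factor $|\xi|\lag\xi\rag^{-2}$, which after Cauchy--Schwarz against $\sqrt{\si}$ is bounded by $\eta|u|_{\si,\tau,\la}^2$ plus a low-velocity piece, while the exponential part produces a term $\lesssim\la^2\int w_{\tau,\la}^2\,\si^{ij}\xi_i\xi_j|u|^2$. The point, flagged just before the lemma, is that $\phi^{ij}(v)$ is degenerate in the $v$-direction, so $\si^{ij}\xi_i\xi_j\sim\lag\xi\rag^{\ga}$ rather than $\lag\xi\rag^{\ga+2}$; hence this term is $\lesssim\la^2\int w_{\tau,\la}^2\lag\xi\rag^{\ga+2}|u|^2$ and is absorbed into $\kappa|u|_{\si,\tau,\la}^2$ as soon as $\la$ is small, \emph{uniformly in} $t\ge0$ because $\la/(1+t)^{\vth}\le\la$.

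The contribution of $\mathcal{K}u$ to $\langle w_{\tau,\la}^2\FL u,u\rangle$ is written as a double integral in $(\xi,\xi')$ and split according to $|\xi|\le 2C$ versus $|\xi|>2C$: on the high-velocity region the fast decay of the kernel (and of the ratio of weights at $\xi$ and $\xi'$, using $0\le\la\ll1$) bounds it by $\eta|u|_{\si,\tau,\la}^2$, and the low-velocity region gives exactly $C|\chi_{\{|\xi|\le 2C\}}u|_\tau^2$; choosing $\eta$ small proves \eqref{est.l}. For \eqref{est.lv} I apply $\pa_\be$ and use $\pa_\be(\FL u)=\FL(\pa_\be u)+[\pa_\be,\FL]u$, the commutator consisting of velocity derivatives of the coefficients $\si^{ij}$ and of the $\mathcal{K}$-kernel multiplied by velocity derivatives of $u$ of order at most $|\be|$, at least one order of which has been spent on a coefficient. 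Repeating the integration-by-parts computation above with $\tau=|\be|-\ell$ — whose exponent $(\ga+2)\tau$ is chosen precisely so that one extra velocity derivative is weighted by one more power of $\lag\xi\rag^{\ga+2}$, matching the gain available from $\si$ — yields $\kappa|\pa_\be u|_{\si,\tau,\la}^2$; by Young's inequality the commutator contributions carrying a $|\be|$-th order derivative of $u$ go into $\eta\sum_{|\be'|=|\be|}|\pa_{\be'}u|_{\si,\tau,\la}^2$ and the strictly lower-order ones into the $C_\eta$-sum, giving \eqref{est.lv}.

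Part (ii) rests on the bilinear representation of $\Ga$: after one integration by parts in $\xi$, the pairing becomes a sum of terms of the shape
\[
\langle w_{\tau,\la}^2\,\pa_\be^\al\Ga(u_1,u_2),\pa_\be^\al u_3\rangle
\sim\sum\int_{\R^3_x}\!\!\int_{\R^3_\xi}w_{\tau,\la}^2\,\bigl[\phi^{ij}\!\ast(\FM^{1/2}\pa_{\be''}^{\al'}u_1)\bigr]\,\pa_j\pa_{\be-\be'}^{\al-\al'}u_2\;\pa_i\pa_\be^\al u_3,
\]
where Leibniz distributes $\pa_\be^\al$ (so that $\pa^{\al'}_{\be'}$ hits the convolution factor and $\pa^{\al-\al'}_{\be-\be'}$ hits $u_2$) and the $\xi$-derivatives inside the convolution are turned into $\xi'$-derivatives on $u_1$, producing the index $\be''\le\be'$; the leftover $\pa_i$ is moved onto $\pa_\be^\al u_3$ and the resulting weight commutator is lower order by the computation above. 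One then uses the pointwise estimates for the Landau kernel (\cite{Guo-L,SG}): $|\phi^{ij}\!\ast(\FM^{1/2}g)(\xi)|$ and its derivatives are controlled by $\lag\xi\rag^{\ga+2}$ times a Gaussian-weighted local $L^2_\xi$-norm of $g$, so the factor $\lag\xi\rag^{\ga+2}=\lag\xi\rag^{(\ga+2)/2}\cdot\lag\xi\rag^{(\ga+2)/2}$ splits onto the $u_2$ and $u_3$ factors to form $|\cdot|_{\si,\tau,\la}$-norms (the convolution factor itself measured in $|\cdot|_\tau$ or $|\cdot|_{\si,\tau}$ without the $\la$-weight, which is harmless since the Gaussian dominates). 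A Sobolev embedding $H^2_x\hookrightarrow L^\infty_x$ on whichever of the three factors carries the fewest space derivatives — possible because $N\ge 8$ — followed by Cauchy--Schwarz in $(x,\xi)$ yields \eqref{est.ga}.

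The main obstacle, and the reason one must track \cite{SG} rather than invoke the hard-potential Landau theory, is the complete absence of any large-velocity gain when $-3\le\ga<-2$: $\si^{ij}$ is only $\lag\xi\rag^{\ga+2}$-bounded (and $\lag\xi\rag^{\ga}$ in the degenerate direction), so every time a velocity derivative falls on $w_{\tau,\la}$ or on a Landau coefficient the extra power of $|\xi|$ it creates must be paid for exactly — out of the $\si$-dissipation through its $\lag\xi\rag^{\ga+2}$ factor, out of the small parameter $\la$ for the exponential part of the weight (via the $\si^{ij}\xi_i\xi_j\sim\lag\xi\rag^\ga$ cancellation), or, failing both, thrown into the low-velocity error term or the $C_\eta$-norms. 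Arranging the smallness of $\la$ and $\eta$ relative to $\kappa$ consistently, and checking that $(\ga+2)\tau$ with $\tau=|\be|-\ell$ is exactly the weight exponent balancing derivative count against the available $\si$-gain, is the technical crux; the remaining estimates are routine Cauchy--Schwarz and Sobolev bookkeeping.
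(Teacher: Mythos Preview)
Your proposal is correct and aligns with the paper's approach: the paper itself does not give a proof but merely remarks that, since $\la/(1+t)^{\vth}\le\la$ is small uniformly in $t\ge 0$, the argument of \cite{SG} carries over verbatim to the time-dependent weight $w_{\tau,\la}$. Your write-up supplies exactly those \cite{SG}-details (the $\FL=-A+\mathcal{K}$ splitting, the $\si^{ij}\xi_i\xi_j\sim\lag\xi\rag^{\ga}$ cancellation absorbing the exponential-weight commutator for small $\la$, and the bilinear $\Ga$ estimate via Leibniz plus kernel bounds), so it is more explicit than, but fully consistent with, what the paper intends.
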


\section{Basic Lemmas}

To prove Theorem \ref{thm.m}, for brevity, we only focus on obtaining the uniform-in-time {\it a priori} estimate on the solution $u(t,x,\xi)$ to the Cauchy problem \eqref{rcp}. Recall \eqref{def.tri}. The main goal is to construct an  energy functional $\CE_{N,\ell,\la}(t)\sim \trn u(t) \trn_{N,\ell,\la}^2 $ such that the following estimate holds. Similar to \eqref{def.triX}, define
\begin{equation}\label{def.x}
    \widetilde{X}_{N,\ell,\la}(t)=\sup_{0\leq s\leq t}\CE_{N,\ell,\la}(s)+\sup_{0\leq s\leq t} (1+s)^{\frac{3}{2}}\CE_{N,\ell-1,\la}(s)
    +\sup_{0\leq s\leq t} (1+s)^{2(1+\vth)}\left\|\na_x^2\phi (s)\right\|_{H^{N-1}}^2.
\end{equation}
Then, by assuming that
$\widetilde{X}_{N,\ell,\la}(t)$ is small  in $0\leq t<T$ for given $T>0$, for the proper choice of parameters $N,\ell,\la,\vth$, we shall prove  that
\begin{equation}\label{est.X}
  \widetilde{X}_{N,\ell,\la}(t)\leq C\left\{Y_{N,\ell,\la}(0)^2+\widetilde{X}_{N,\ell,\la}(t)^2\right\}
\end{equation}
holds for $0\leq t<T$. Recall that $Y_{N,\ell,\la}(0)$ given in \eqref{thm.ge.1} depends only on initial data $u_0$. For later use, corresponding to the  energy functional $\CE_{N,\ell,\la}(t)$, we also define the functional for
the energy dissipation rate
\begin{multline}\label{def.ed}
   \CD_{N,\ell,\la}(t) =  \sum_{|\al|+|\be|\leq N}\left\|\pa_\be^\al\{\FI-\FP\} u(t)\right\|_{\si,|\be|-\ell,\la}^2+\|a\|^2+\sum_{|\al|\leq N-1}\left\|\na_x\pa^\al (a,b,c)\right\|^2\\
   +\frac{1}{(1+t)^{1+\vth}} \sum_{|\al|+|\be|\leq N}\left\|\pa_\be^\al\{\FI-\FP\} u(t)\right\|_{|\be|+\frac{1}{\ga+2}-\ell,\la}^2.
\end{multline}

The proof of \eqref{est.X} will be given in the next section. Here,  we prepare some more estimates on the nonlinear terms. The proofs follow from
the arguments used in \cite{DYZ-s} for the Vlasov-Poisson-Boltzmann system and hence most of details  will be omitted for brevity.
However,  when necessary, we will point out the key points in the proof and the main differences from \cite{DYZ-s}.

\begin{lemma}\label{lem.ga}
(i) Let $N\geq 4$, $\ell\geq 0$, $0<\la\ll 1$. Then
\begin{eqnarray}
&\dis \left|\left\lag \Ga(u,u),u\right\rag \right|\leq C\left\{\CE_{N,\ell,\la}(t)\right\}^{1/2} \left\{\left\|\na_x (a,b,c)\right\|_{H^1}^2+\|\{\FI-\FP\}u\|_\si^2\right\},\label{lem.ga.1}\\
&\dis \left|\left\lag \Ga(u,u),w_{-\ell,\la}^2(t,\xi)\{\FI-\FP\}u\right\rag\right|\leq  C\left\{\CE_{N,\ell,\la}(t)\right\}^{1/2} \left\{\left\|\na_x (a,b,c)\right\|_{H^1}^2+\|\{\FI-\FP\}u\|_{\si,\ell,\la}^2\right\}.\label{lem.ga.2}
\end{eqnarray}

(ii) Let $N\geq 8$, $1\leq |\al|+|\be|\leq N$, $\ell\geq |\be|$ and $0< \la \ll 1$. For $u=u(t,x,\xi)$, define $u_{\al\be}$ as $u_{\al\be}=\pa^\al u$ if $|\be|=0$ and $u_{\al\be}=\pa^\al_\be\{\FI-\FP\} u$ if $|\be|\geq 1$. Then
\begin{equation}\label{lem.ga.3}
\left|\left\lag \pa^\al_\be \Ga(u,u), w_{|\be|-\ell,\la}^2(t,\xi)u_{\al\be} \right\rag\right|
\leq  C\left\{\CE_{N,\ell,\la}(t)\right\}^{1/2}\left\{\left\|\na_x (a,b,c)\right\|_{H^{N-1}}^2+\CD_{N,\ell,\la}(t)\right\}.
\end{equation}
\end{lemma}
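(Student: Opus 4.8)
\textbf{Proof plan for Lemma \ref{lem.ga}.}

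The strategy is to reduce every inner product on the left-hand side to the trilinear bound \eqref{est.ga} from the previous lemma and then absorb the velocity-local quantities $|\cdot|_{\si,\tau,\la}$ and $|\cdot|_{\tau,\la}$ into the dissipation functional $\CD_{N,\ell,\la}(t)$, while the remaining factors are controlled by $\{\CE_{N,\ell,\la}(t)\}^{1/2}$ after integrating in $x$ and applying Sobolev embedding $H^2(\R^3_x)\hookrightarrow L^\infty_x$. Since $N\geq 4$ (resp.\ $N\geq 8$), we always have at least two spare $x$-derivatives to place on whichever factor we choose to estimate in $L^\infty_x$. For part (i), take $\al=\be=0$ and write, using the macro-micro decomposition $u=\FP u+\{\FI-\FP\}u$; the key point is that $\FP u$ only involves $(a,b,c)$, so after applying \eqref{est.ga} with $\tau=0$ and $\tau=-\ell$ respectively, every term is a product of three factors, one of which can be taken in $L^\infty_x$ (bounded by $\{\CE_{N,\ell,\la}\}^{1/2}$ via Sobolev and the definition \eqref{def.tri}) and the other two of which combine to give either $\|\na_x(a,b,c)\|_{H^1}^2$ or $\|\{\FI-\FP\}u\|_\si^2$ (resp.\ $\|\{\FI-\FP\}u\|_{\si,\ell,\la}^2$) after the $x$-integration. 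One has to be a little careful that when both $u$'s in $\Ga(u,u)$ are replaced by $\FP u$, the contribution still carries a derivative on $(a,b,c)$: this uses the standard cancellation $\Ga(\FP u,\FP u)$ having no zeroth-order-in-$x$ part relative to the conservation laws, exactly as in \cite{DYZ-s}, so that a $\na_x$ can always be extracted.

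For part (ii), fix $\al,\be$ with $1\leq|\al|+|\be|\leq N$ and apply \eqref{est.ga} with $\tau=|\be|-\ell$, $u_1=u_2=u$, $u_3=u_{\al\be}$. The right-hand side is a sum over $\be''\leq\be'\leq\be$, $|\al'|+|\be'|\leq N$ of terms
\begin{equation*}
\left\{\left|\pa_{\be''}^{\al'}u\right|_\tau\left|\pa_{\be-\be'}^{\al-\al'}u\right|_{\si,\tau,\la}+\left|\pa_{\be''}^{\al'}u\right|_{\si,\tau}\left|\pa_{\be-\be'}^{\al-\al'}u\right|_{\tau,\la}\right\}\left|\pa_\be^\al u_{\al\be}\right|_{\si,\tau,\la},
\end{equation*}
integrated in $x$. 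The plan is: in each product of three $x$-functions, at least one has total order $\leq N/2<N-2$ of combined $x,\xi$-derivatives (since the orders sum to something $\leq 2N$ but there are three of them, one is $\leq 2N/3\leq N-2$ for $N\geq 6$), so put that factor in $L^\infty_x$ and bound it by $\{\CE_{N,\ell,\la}(t)\}^{1/2}$; the remaining two factors are then integrated in $x$ and estimated by $\CD_{N,\ell,\la}(t)$ plus $\|\na_x(a,b,c)\|_{H^{N-1}}^2$, using the macro-micro split to convert $|\cdot|_\tau$-type norms of $\FP u$ into $(a,b,c)$-norms and $|\cdot|_{\si,\tau,\la}$, $|\cdot|_{\tau,\la}$-norms of $\{\FI-\FP\}u$ into the first and last terms of \eqref{def.ed}. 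The factor $u_{\al\be}$ is defined precisely so that it is already $\{\FI-\FP\}u$ when $|\be|\geq1$, matching the first sum in \eqref{def.ed}; when $|\be|=0$ one uses $\|\na_x\pa^\al a\|$ together with the $\{\FI-\FP\}$ part from the equation.

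The main obstacle, and the place where the Landau structure (as opposed to the Boltzmann case of \cite{DYZ-s}) actually matters, is the bookkeeping of the velocity weight $w_{|\be|-\ell,\la}$ when a $\xi$-derivative is moved from $u_2$ onto the weight or vice versa: since the algebraic exponent is $(\ga+2)\tau$ with $\ga+2<0$, increasing $|\be|$ by one \emph{adds} a factor $\lag\xi\rag^{\ga+2}$ rather than $\lag\xi\rag^{2}$, and one must check that this is exactly the decay built into the $\si$-norm (Lemma \ref{lem.FL}(i)) so that the product of the two remaining factors genuinely closes inside $|\cdot|_{\si,\tau,\la}^2$-type quantities without losing moments. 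A secondary technical point is the low-velocity cutoff region $|\xi|\leq 2C$ appearing in \eqref{est.l}: there the weight is comparable to $1$ and one must argue that the corresponding finite-moment pieces are absorbed either into $\|a\|^2$ or into the non-weighted dissipation, just as in the cutoff-potential argument. Once these weight-counting issues are handled, the three estimates \eqref{lem.ga.1}--\eqref{lem.ga.3} follow by the same Cauchy--Schwarz-plus-Sobolev routine as in \cite{DYZ-s}, which is why the details are omitted there.
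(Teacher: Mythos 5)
Your overall skeleton (the decomposition \eqref{ga.de}, the trilinear bound \eqref{est.ga}, Sobolev embedding in $x$ and Cauchy--Schwarz, following \cite{DYZ-s,SG}) is the same as the paper's. However, there is a genuine gap in your treatment of part (i), precisely at the point you flag: the claim that ``$\Ga(\FP u,\FP u)$ has no zeroth-order-in-$x$ part relative to the conservation laws, so that a $\na_x$ can always be extracted'' is not a real mechanism. $\Ga(\FP u,\FP u)$ is a pointwise-in-$x$ quadratic expression in $(a,b,c)$ with explicit Gaussian-type $\xi$-profiles; no spatial derivative can be ``extracted'' from it. The correct mechanism, which is what the reduction to \cite{DYZ-s} actually uses, has two ingredients. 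First, $\Ga$ is microscopic: since $1,\xi_i,|\xi|^2$ are collision invariants, $\lag \Ga(u,u),\FP v\rag_\xi=0$, so in \eqref{lem.ga.1} the test function $u$ may be replaced by $\{\FI-\FP\}u$; this is indispensable, because otherwise the pure macroscopic contribution is essentially $\int_{\R^3}|(a,b,c)|^3\,dx$, and since in the whole space $\|(b,c)\|^2$ without derivatives is not dissipated, Sobolev embedding produces only one factor of $\|\na_x(a,b,c)\|_{H^1}$ and the bound $C\{\CE_{N,\ell,\la}\}^{1/2}\{\|\na_x(a,b,c)\|_{H^1}^2+\|\{\FI-\FP\}u\|_\si^2\}$ cannot be reached. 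Second, for the remaining macro--macro--micro term one uses the whole-space inequality $\|(a,b,c)\|_{L^\infty_x}\leq C\|\na_x(a,b,c)\|_{H^1}$ (derivatives only) on one factor, puts the underived $L^2_x$ factor $\|(a,b,c)\|\leq C\{\CE_{N,\ell,\la}\}^{1/2}$ into the prefactor, and closes by Cauchy--Schwarz between $\|\na_x(a,b,c)\|_{H^1}$ and $\|\{\FI-\FP\}u\|_\si$ (respectively $\|\{\FI-\FP\}u\|_{\si,\ell,\la}$ for \eqref{lem.ga.2}). Your plan assigns the roles the other way around (the $L^\infty_x$ factor bounded by $\{\CE\}^{1/2}$, the $L^2_x$ factors forming the dissipation), which fails exactly for these macroscopic terms.

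A secondary caution for part (ii): the choice of which factor goes into $L^\infty_x$ cannot be made by derivative count alone. In \eqref{est.ga} each product contains exactly one factor measured in a non-$\si$ (energy-type) norm and two in $\si$-type norms; since for soft $\ga$ the flat weighted norm is not controlled by the dissipation, the case analysis must ensure that the $L^2_x$-kept factors are always either $\si$-norms of the microscopic part (first term of \eqref{def.ed}) or $x$-differentiated $(a,b,c)$, while energy-type norms are the ones sent to $L^\infty_x$; this is the weight/derivative bookkeeping carried out in \cite{DYZ-s,SG}, and your ``one factor has order $\leq 2N/3$'' criterion by itself does not guarantee it. With these corrections your argument coincides with the paper's intended proof.
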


\begin{proof}
Apply the decomposition
\begin{equation}\label{ga.de}
\Ga(u,u)=\Ga(\FP u,\FP u)+\Ga(\FP u,\{\FI-\FP\}u)+\Ga(\{\FI-\FP\}u,\FP u)\\
+\Ga(\{\FI-\FP\}u,\{\FI-\FP\}u).
\end{equation}
All the  inner products on the left-hand side of \eqref{lem.ga.1}, \eqref{lem.ga.2} and \eqref{lem.ga.3} corresponding to the fourth term of \eqref{ga.de} can be estimated directly by using \eqref{est.ga}. And for those from the first three terms of \eqref{ga.de}, we recall the expression of $\Ga(u_1,u_2)$ from \cite{Guo-L,SG},
\begin{eqnarray*}
  \Ga(u_1,u_2) &=& \pa_i\left[\left\{B^{ij}\ast\left[\FM^{1/2}u_1\right]\right\}\pa_j u_2\right]-\left\{B^{ij}\ast\left[\frac{\xi_i}{2}\FM^{1/2} u_1\right]\right\}\pa_j u_2\\
  &&-\pa_i\left[\left\{B^{ij}\ast \left[\FM^{1/2}\pa_j u_1\right]\right\} u_2\right] +\left\{B^{ij} \ast\left[\frac{\xi_i}{2}\FM^{1/2}\pa_j u_1\right]\right\} u_2,
\end{eqnarray*}
and also recall \eqref{macro} for the expression of $\FP u$. Thus, similar to the proof of \eqref{est.ga} in \cite{SG}, one can follow  \cite{DYZ-s} to obtain the  estimates on these terms. This then completes the proof of Lemma \ref{lem.ga}.
\end{proof}

\begin{lemma}\label{lem.xiu}
(i) Let $u$ be the solution to \eqref{rcp}. Then
\begin{multline}\label{lem.xiu.1}
\left\lag \frac{1}{2} \xi\cdot \na_x\phi u, u\right\rag \leq \frac{1}{2} \frac{d}{dt}\int_{\R^3} |b|^2 (a+2c)\,dx \\
+C \left\{\|(a,b,c)\|_{H^2}+\|\na_x\phi\|_{H^1} + \|\na_x\phi\|\cdot \|\na_x b\|\right\}
\left\{\|\na_x (a,b,c)\|^2+\left\|{\lag \xi\rag^{\frac{\ga+2}{2}}}\{\FI-\FP\}u\right\|^2\right\}\\
+C\left \|\na_x^2\phi\right\|_{H^1} \left\|\lag \xi \rag^{1/2} \{\FI-\FP\} u\right\|^2.
\end{multline}
(ii) Let $N\geq 4$, $1\leq |\al|\leq N$, and $\ell\geq 0$. Then
\begin{multline}\label{lem.xiu.2}
\left\lag \pa^\al\left (\frac{1}{2} \xi\cdot \na_x\phi u\right), w_{-\ell,\la}^2(t,\xi) \pa^\al u \right\rag\\
\leq C\|\na_x^2\phi\|_{H^{N-1}}  \sum_{1\leq |\al|\leq N}\left\{\left\|\lag \xi \rag^{1/2} \pa^\al\{\FI-\FP\} u\right\|_{-\ell,\la}^2
+\left\|\pa^\al (a,b,c)\right\|^2\right\}.
\end{multline}
(iii) Let $N\geq 4$, $1\leq |\al|+|\be|\leq N$, $|\be|\geq 1$, and $\ell\geq |\be|$. Then
\begin{multline}\label{lem.xiu.3}
\left\lag \pa^\al_\be \left(\frac{1}{2} \xi\cdot \na_x\phi \{\FI-\FP\}u\right),w_{|\be|-\ell,\la}^2(t,\xi) \pa^\al_\be \{\FI-\FP\}u \right\rag \\
\leq C \|\na_x^2\phi\|_{H^{N-1}}\sum_{|\al|+|\be|\leq N } \left\|\lag \xi \rag^{1/2} \pa^\al_\be \{\FI-\FP\}u\right\|_{|\be|-\ell,\la}^2.
\end{multline}

\end{lemma}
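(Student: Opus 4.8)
The plan is to treat the three estimates \eqref{lem.xiu.1}, \eqref{lem.xiu.2}, \eqref{lem.xiu.3} in turn, each by isolating the worst term in the velocity weight and then splitting the factor $u$ (or $\{\FI-\FP\}u$) into its macroscopic and microscopic parts $\FP u$ and $\{\FI-\FP\}u$. The basic mechanism throughout is that $\xi\cdot\na_x\phi$ carries one power of $|\xi|$, which must be absorbed either by the collision-frequency weight $\lag\xi\rag^{(\ga+2)/2}$ coming from the dissipation norm $|\cdot|_{\si,\tau,\la}$ (via Lemma \ref{lem.FL}(i)), or — when that is not enough, since $\ga+2<0$ — by integration by parts in $x$ to gain a derivative on $\phi$ and hence an extra decay factor, trading $\|\na_x\phi\|$ for $\|\na_x^2\phi\|_{H^{N-1}}$. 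For \eqref{lem.xiu.1}, first expand $u=\FP u+\{\FI-\FP\}u$ in the bilinear form $\lag\tfrac12\xi\cdot\na_x\phi\,u,u\rag$. The pure macroscopic piece $\lag\tfrac12\xi\cdot\na_x\phi\,\FP u,\FP u\rag$ produces, after carrying out the $\xi$-integration against $\FM$, a spatial integral of the form $\int\na_x\phi\cdot(\text{cubic in }a,b,c)\,dx$; the exact-derivative term $\tfrac12\tfrac{d}{dt}\int|b|^2(a+2c)\,dx$ is extracted here by using the local conservation laws (the macroscopic equations for $a,b,c$ derived from \eqref{rcp}) to rewrite $\na_x\phi=\na_x\phi$ paired with $b$ in a way that reconstructs a time derivative, the remainder being controlled by $\|(a,b,c)\|_{H^2}$ and $\|\na_x\phi\|_{H^1}$ times $\|\na_x(a,b,c)\|^2$. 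The cross terms $\FP u$ against $\{\FI-\FP\}u$ and the pure microscopic term are estimated by Hölder in $x$ (putting $\na_x\phi$ or $\na_x^2\phi$ in $L^\infty_x\hookleftarrow H^2_x$, the other factors in $L^2$) together with the bound $|\xi|^{1/2}\lesssim\lag\xi\rag^{(\ga+2)/2}\cdot\lag\xi\rag^{(-\ga-1)/2}$; the low-velocity part is absorbed into $\|\lag\xi\rag^{(\ga+2)/2}\{\FI-\FP\}u\|^2$ and the high-velocity part, where one extra $\lag\xi\rag^{1/2}$ is unavoidable, is put into the last term $\|\na_x^2\phi\|_{H^1}\|\lag\xi\rag^{1/2}\{\FI-\FP\}u\|^2$ after an integration by parts in $x$.

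For \eqref{lem.xiu.2} and \eqref{lem.xiu.3} the strategy is the same but now with $|\al|\ge1$ (resp.\ $|\al|+|\be|\ge1$), so I would apply $\pa^\al_\be$ by the Leibniz rule and split the sum according to how many derivatives land on $\na_x\phi$. The crucial observation is that there is always at least one $x$-derivative on the $\phi$-factor after distributing: either $\pa^\al$ already puts $|\al|\ge1$ derivatives somewhere, and if none of them hit $\phi$ we integrate by parts in $x$ to move one onto $\phi$, or in \eqref{lem.xiu.3} the term with no $x$-derivative on $\phi$ has $\na_x\phi$ itself which is already one derivative; in every case the $\phi$-factor appears as $\pa^{\al'}\na_x\phi$ with $|\al'|\le N-1$, controlled in $L^\infty_x$ (when $|\al'|\le N-2$, using $H^2_x\hookrightarrow L^\infty_x$ and $N\ge4$) or in $L^2_x$ paired against an $L^\infty_x$ partner, and in all cases bounded by $\|\na_x^2\phi\|_{H^{N-1}}$. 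The velocity weight $w_{|\be|-\ell,\la}^2$ is handled by noting the commutator of $\pa_\be$ with multiplication by $\xi_i$ only lowers $|\be|$, so by the convention $\ell\ge|\be|$ and $\tau=|\be|-\ell\le0$ the weight is monotone and one can always absorb a factor $\lag\xi\rag$ at the cost of replacing the weighted $L^2$-norm by a $\lag\xi\rag^{1/2}$-weighted one — exactly the quantity appearing on the right-hand sides. The remaining $\lag\xi\rag^{1/2}\{\FI-\FP\}u$-type factors go to the dissipation side, and the macroscopic pieces in \eqref{lem.xiu.2} give $\|\pa^\al(a,b,c)\|^2$.

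The main obstacle is the borderline growth in $\xi$: because $\ga+2<0$, the natural dissipation weight $\lag\xi\rag^{(\ga+2)/2}$ does \emph{not} suffice to absorb the full power $|\xi|$ coming from $\xi\cdot\na_x\phi$, so one is forced to leave behind a genuine $\lag\xi\rag^{1/2}$ in the microscopic terms and compensate by the extra decay hidden in $\|\na_x^2\phi\|_{H^{N-1}}$; getting this bookkeeping consistent — i.e.\ making sure that every term with a leftover $\lag\xi\rag^{1/2}$ is multiplied by a genuinely higher $x$-derivative of $\phi$ (so that the later time-decay argument, via $\|\na_x^2\phi\|_{H^{N-1}}\lesssim(1+t)^{-(1+\vth)}\widetilde X^{1/2}$ from \eqref{def.x}, closes) — is the delicate point, and it is precisely where integration by parts in $x$ must be used rather than a crude Hölder estimate. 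The rest is the routine Sobolev interpolation and Leibniz-rule combinatorics, which I would carry over verbatim from \cite{DYZ-s}.
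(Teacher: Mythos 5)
Your proposal follows essentially the same route as the paper: decompose $u=\FP u+\{\FI-\FP\}u$, extract the exact time derivative in (i) by integrating the $\FP u$--$\FP u$ term in $\xi$ and substituting $\na_x\phi$ from the macroscopic momentum equation, and treat (ii)--(iii) by the Leibniz rule with H\"older/Sobolev bounds, with the $\phi$-factor always controlled by $\|\na_x^2\phi\|_{H^{N-1}}$. One small caveat: for the terms where no $x$-derivative falls on $\phi$, the integration by parts in $x$ you suggest is unnecessary (and for $|\al|=N$ it would push the derivative count on $u$ beyond $N$, or simply reproduce the same term); the embedding $\|\na_x\phi\|_{L^\infty_x}\lesssim\|\na_x^2\phi\|_{H^1}$ already suffices, as you in fact also invoke.
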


\begin{proof}
To prove \eqref{lem.xiu.1}, notice that since $u$ is the solution to \eqref{rcp}, $u$ satisfies the momentum equation
\begin{equation*}
    \pa_t b +\na_x (a+2c)+\na_x \cdot \int_{\R^3} \xi \otimes \xi \{\FI-\FP\} u \,d\xi -\na_x \phi a= \na_x\phi.
\end{equation*}
Then \eqref{lem.xiu.1} follows from using the decomposition $u=\FP u+\{\FI-\FP\} u$ as in \eqref{ga.de} and the Cauchy-Schwarz inequality, where for low order term $\lag \frac{1}{2} \xi\cdot \na_x\phi \FP u, \FP u\rag$, as in \cite{DYZ-s}, we need to take the velocity integration  and then replace $\na_x\phi$ in terms of the equation above. It is then straightforward to obtain both \eqref{lem.xiu.2} and \eqref{lem.xiu.3} by using the Leibnitz rule, H\"{o}lder and Sobolev inequalities. The details proof of the lemma  are  omitted.
\end{proof}

\begin{lemma}\label{lem.pu}
(i) Let  $-3\leq \ga<-2$, $N\geq 4$, $1\leq |\al|\leq N$, and $\ell\geq 0$. Then
\begin{multline}\label{lem.pu.1}
    \left\lag \pa^\al \left(\na_x\phi \cdot \na_\xi u\right), w_{-\ell,\la}^2(t,\xi) \pa^\al u \right\rag\\
\leq C\left\|\na_x^2\phi\right\|_{H^{N-1}}\left\{\sum_{|\al|+|\be|\leq N,|\be|\leq 1}\left\|\lag \xi \rag \pa^\al_\be \{\FI-\FP\} u\|_{|\be|-\ell,\la}^2 +\|\na_x (a,b,c)\right\|_{H^{N-1}}^2\right\}.
\end{multline}
(ii) Let  $-3\leq \ga<-2$, $1\leq |\al|+|\be|\leq N$, $|\be|\geq 1$, and $\ell\geq |\be|$. Then
\begin{multline}\label{lem.pu.2}
\left\lag \pa^\al_\be \left(\na_x\phi \cdot \na_\xi \{\FI-\FP\}u\right),w_{|\be|-\ell,\la}^2(t,\xi) \pa^\al_\be \{\FI-\FP\}u \right\rag \\ \leq C\left\|\na_x^2\phi\right\|_{H^{N-1}} \sum_{|\al|+|\be|\leq N} \left\|\lag \xi \rag \pa_\be^\al \{\FI-\FP\}u\right\|_{|\be|-\ell,\la}^2.
\end{multline}
\end{lemma}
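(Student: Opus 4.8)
The plan is to prove Lemma \ref{lem.pu} by the same mechanism as Lemma \ref{lem.xiu}, exploiting the fact that the transport term $\na_x\phi\cdot\na_\xi u$ has the crucial structure that $\na_x\phi$ depends only on $(t,x)$ and not on $\xi$, so it commutes with the velocity weight $w_{|\be|-\ell,\la}$. First I would apply the Leibnitz rule to $\pa_\be^\al(\na_x\phi\cdot\na_\xi u)$, splitting the sum over $\al'\le\al$, $\be'\le\be$ into the term where all derivatives land on $u$ and the terms where at least one $x$-derivative (and, for part (ii), possibly one $\xi$-derivative from $\be$) lands on $\na_x\phi$. For part (i), when $|\be|=0$ there are no velocity derivatives on $\phi$, so the generic term is $\pa^{\al'}\na_x\phi\cdot\na_\xi\pa^{\al-\al'}u$ paired against $w_{-\ell,\la}^2\pa^\al u$. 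For part (ii), one must also account for the single $\xi_i$-derivative implicit in $\na_x\phi\cdot\na_\xi$ acting on the $\{\FI-\FP\}u$ factor; since $\phi$ is $\xi$-independent, $\pa_\be(\na_x\phi\cdot\na_\xi v)=\na_x\phi\cdot\na_\xi\pa_\be v$, so the velocity Leibnitz expansion is trivial and only the $x$-Leibnitz rule produces extra terms.

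Next, for the main term $\na_x\phi\cdot\na_\xi\pa^\al u$ (resp.\ $\na_x\phi\cdot\na_\xi\pa_\be^\al\{\FI-\FP\}u$), I would integrate by parts in $\xi$ to move $\na_\xi$ off this factor. Because $w_{\tau,\la}$ is $\xi$-dependent this generates a term $\na_x\phi\cdot(\na_\xi w_{-\ell,\la}^2)\pa^\al u\,\pa^\al u$; the key point is that $|\na_\xi w_{\tau,\la}^2|\lesssim \lag\xi\rag w_{\tau,\la}^2$ uniformly in $t$ (here one uses $\la/(1+t)^\vth$ bounded and $\ga+2<0$), which is exactly why the weight $\lag\xi\rag$ — rather than $\lag\xi\rag^2$ — appears on the right-hand side of \eqref{lem.pu.1}--\eqref{lem.pu.2}. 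The other boundary term from integration by parts is $\na_x\phi\cdot w_{-\ell,\la}^2\pa^\al u\,\na_\xi\pa^\al u$, which one re-symmetrizes or simply bounds by Cauchy-Schwarz, absorbing $|\na_\xi\pa^\al u|$ into the $\lag\xi\rag^{1/2}$-weighted norm via $|v|_{\si,\tau,\la}^2\gtrsim|\lag\xi\rag^{(\ga+2)/2}\na_\xi v|_{\tau,\la}^2$ from Lemma \ref{lem.FL}(i); however, since we are stating the bound in terms of $\lag\xi\rag$-weighted $L^2$ rather than the $\si$-norm, the cleaner route is to just put one $\lag\xi\rag$ on each copy of the derivative of $u$ and control the whole thing by $\|\na_x\phi\|_{L^\infty_x}\lesssim\|\na_x^2\phi\|_{H^{N-1}}$ (Sobolev embedding in $\R^3$, using $N\ge4$). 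For the lower-order factors $\pa^{\al'}u$ with $|\al'|<|\al|$, the macroscopic part contributes $\pa^{\al'}(a,b,c)$ which is why $\|\na_x(a,b,c)\|_{H^{N-1}}$ appears, and the microscopic part needs at most one velocity derivative — hence the restriction $|\be|\le1$ in the sum in \eqref{lem.pu.1} — and is handled by Hölder in $x$ (putting the lower-derivative factor in $L^\infty_x$ or $L^6_x$ and the others in $L^2_x$ or $L^3_x$) together with the Sobolev inequality; this is where $N\ge4$ is used so that enough derivatives can be spent on the $L^\infty$ factor.

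The main obstacle, and the only genuinely delicate point, is the treatment of the top-order term in which no derivative falls on $\na_x\phi$: here one cannot afford to lose a full power $\lag\xi\rag^2$ from differentiating the Gaussian-type weight, and one must carefully track that the velocity-weight gradient is controlled by $\lag\xi\rag w_{\tau,\la}^2$ and that the $\la$-dependent exponential factor contributes a harmless $\la\lag\xi\rag/(1+t)^\vth\le C\lag\xi\rag$ — exactly the point flagged in the paper's introduction, that $\lag\xi\rag^{-(\ga+2)}$ grows slower than $\lag\xi\rag^2$ when $-3\le\ga<-2$, so that one extra velocity moment (here $\lag\xi\rag$, or at worst $\lag\xi\rag$ against $\lag\xi\rag$) can be absorbed by the dissipation/energy norms carried by the weight exponents $|\be|-\ell$. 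All remaining estimates are routine Leibnitz/Hölder/Sobolev manipulations identical to those in \cite{DYZ-s}, so I would carry out the integration-by-parts-in-$\xi$ argument in detail for the principal term and simply indicate that the lower-order terms follow as in \cite{DYZ-s}, with the proof of part (ii) being strictly simpler since there is no macroscopic contribution.
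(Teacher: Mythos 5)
Your plan follows the paper's proof in essentially the same way: Leibnitz rule (trivial in $\xi$ because $\na_x\phi$ is $\xi$-independent), symmetrization/integration by parts in $\xi$ for the term where no derivative hits $\na_x\phi$, the bound $|\na_\xi w_{\tau,\la}^2|\lesssim \lag \xi\rag w_{\tau,\la}^2$, Sobolev embedding for $\|\na_x\phi\|_{L^\infty_x}$, and the macroscopic/microscopic splitting that produces the $\|\na_x(a,b,c)\|_{H^{N-1}}$ term and the restriction $|\be|\leq 1$ in part (i); in outline this is correct.

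One caveat on where the delicacy actually sits. The top-order term you single out needs no condition on $\ga$ at all: after symmetrization the only loss is the weight-gradient factor $\lag\xi\rag^{-1}+\la\lag\xi\rag/(1+t)^{\vth}\lesssim\lag\xi\rag$, absorbed trivially by the $\lag\xi\rag$-weights on the right-hand side. The point the paper isolates as the crux — and the only place where $-3\leq\ga<-2$ (indeed $\ga\geq-4$) is used — is the Leibnitz terms in which $\na_\xi$ lands on $\pa_\be^{\al-\al'}u$ while a derivative falls on $\na_x\phi$: that factor must be measured in the right-hand-side norm whose weight is $w_{1+|\be|-\ell,\la}=\lag\xi\rag^{\ga+2}w_{|\be|-\ell,\la}$, so passing from the weight $w_{|\be|-\ell,\la}^2$ available in the inner product costs a factor $\lag\xi\rag^{-(\ga+2)}$, which is exactly what the paper's inequality $w_{|\be|-\ell,\la}^2\lesssim\{\lag\xi\rag w_{1+|\be'|-\ell,\la}\}\{\lag\xi\rag w_{|\be|-\ell,\la}\}$, equivalently $2+(\ga+2)+(\ga+2)(|\be'|-|\be|)\geq0$, absorbs. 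You do state this absorption mechanism (``one extra velocity moment against the two $\lag\xi\rag$'s''), so the ingredient is present, but it belongs to the terms you dismiss as routine, not to the weight-gradient term, and conflating it with the gradient of the exponential factor obscures why the hypothesis on $\ga$ enters. Finally, your fallback of ``simply bounding by Cauchy-Schwarz'' the piece $\na_x\phi\, w^2\,\pa_\be^\al u\,\na_\xi\pa_\be^\al u$ fails at top order $|\al|+|\be|=N$, since $\na_\xi\pa_\be^\al u$ then carries $N+1$ derivatives and is not controlled by the right-hand side; only the symmetrization route you also describe is viable there.
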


\begin{proof}
The proof is similar to the one for \eqref{lem.xiu.2} and \eqref{lem.xiu.3} through the Leibnitz rule, integrations by part in $\xi$, the H\"{o}lder and Sobolev inequalities. The difference lies in the fact that $\na_x\phi \cdot \na_\xi u$ and  $\na_x\phi \cdot \na_\xi \{\FI-\FP\}u$ involve the first order derivative
in the velocity variable so that the velocity weight has to be carefully distributed. In fact, to control those inner products on the left-hand side of \eqref{lem.pu.1} and \eqref{lem.pu.2},  for $|\be|\geq 0$ and $\ell\geq |\be|$, it suffices to have
\begin{equation*}
     w_{|\be|-\ell,\la}^2(t,\xi)\lesssim  \left\{\lag \xi \rag w_{1+|\be'|-\ell,\la}(t,\xi)\right\}\cdot \left\{\lag \xi \rag w_{|\be|-\ell,\la}(t,\xi)\right\},\quad |\be'|\leq |\be|,
\end{equation*}
which due to the definition of $w_{\tau,\la}$, is equivalent to require
\begin{equation*}
  2+ (\ga+2)+(\ga+2)(|\be'|-|\be|)\geq 0,\quad |\be'|\leq |\be|.
\end{equation*}
Since $-3\leq \ga<-2$, the above inequality is satisfied and this then
it completes the proof of Lemma \ref{lem.xiu}, cf. \cite{DYZ-s} for more details.
\end{proof}

\begin{lemma}\label{lem.G}
Let $G=\frac{1}{2} \xi\cdot \na_x\phi u-\na_x\phi \cdot \na_\xi u +\Ga(u,u)$. Let $-3\leq \ga<-2$, $N\geq 4$, $\ell_0>\frac{3}{2}$, and $\ell-1\geq \max\left\{N,\frac{\ell_0}{2}-\frac{1}{\ga+2}\right\}$. Then
\begin{equation}\label{lem.G.1}
  \left\|\lag \xi \rag^{-\frac{\ga+2}{2}\ell_0} G(t)\right\|_{Z_1}+\sum_{|\al|\leq 1}\left\|\lag \xi \rag^{-\frac{\ga+2}{2}\ell_0} \pa^\al G(t)\right\|\leq C \CE_{N,\ell-1}(t).
\end{equation}
\end{lemma}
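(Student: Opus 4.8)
The plan is to estimate each of the three pieces of $G=\frac12\xi\cdot\na_x\phi\, u-\na_x\phi\cdot\na_\xi u+\Ga(u,u)$ separately in the $Z_1=L^2_\xi(L^1_x)$ norm (and, for $|\al|\le 1$, in the $L^2_{x,\xi}$ norm), and to control everything by $\CE_{N,\ell-1}(t)$. The common mechanism is that $\|\cdot\|_{Z_1}$ of a product splits, via the Cauchy--Schwarz inequality in $x$, into a product of two $L^2_x$-based norms, one of which will be an $L^2_x$ norm of $\na_x\phi$ or a derivative thereof and the other a velocity-weighted energy-type norm of $u$; and each of those factors is bounded by a piece of $\trn u\trn_{N,\ell-1}^2$ (recall \eqref{def.tri}, which includes $\|\na_x\phi\|_{H^N}^2$). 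First I would dispose of the two linear-in-$\na_x\phi$ transport terms. For $\frac12\xi\cdot\na_x\phi\,u$ one writes, for each fixed $x$, $\|\lag\xi\rag^{-\frac{\ga+2}{2}\ell_0}\,\tfrac12\xi\cdot\na_x\phi\,u(x)\|_{L^1_x}\le C|\na_x\phi(x)|_{L^1_x}\cdot\big(\int_{\R^3}\lag\xi\rag^{-(\ga+2)\ell_0+2}|u|^2\,d\xi\big)^{1/2}$ after Cauchy--Schwarz in $x$; the first factor is $\lesssim\|\na_x\phi\|$ (with $\phi$ decaying, no zero mode by the neutrality condition $\iint\FM^{1/2}u_0=0$), and the exponent $-(\ga+2)\ell_0+2$ on $\lag\xi\rag$ is, up to constants, comparable to the algebraic weight appearing in $\|u\|_{|\be|-\ell,\la}$ once $\ell-1\ge\frac{\ell_0}{2}-\frac1{\ga+2}$; this is exactly where the hypothesis on $\ell$ enters. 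The term $\na_x\phi\cdot\na_\xi u$ is handled the same way after integrating by parts in $\xi$ to move the $\na_\xi$ off $u$ and onto the weight $\lag\xi\rag^{-\frac{\ga+2}{2}\ell_0}$ (which only improves decay in $\xi$), producing again a product of $\|\na_x\phi\|$-type and weighted-$u$-type factors.

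The quadratic Landau term $\Ga(u,u)$ is the one requiring genuine work. Here I would use the explicit formula for $\Ga(u_1,u_2)$ recalled in the proof of Lemma \ref{lem.ga}, which expresses $\Ga$ as a combination of $\pa_i\big[\{B^{ij}\ast[\FM^{1/2}u_1]\}\pa_j u_2\big]$ and similar terms with one derivative on $u_1$ or $u_2$ and convolutions against $B^{ij}$ (or $\tfrac{\xi_i}{2}B^{ij}$) paired with $\FM^{1/2}$. Taking the $Z_1$ norm and Cauchy--Schwarz in $x$ turns each term into a product $\|u_1(x)\|_{\text{(weighted)}}\cdot\|u_2(x)\|_{\text{(weighted)}}$ integrated over $x$, i.e.\ into $\|u\|_{L^2_x(\text{weighted }L^2_\xi)}^2$-type quantities; the algebraic weight $\lag\xi\rag^{-\frac{\ga+2}{2}\ell_0}$ is distributed among the two factors using the convolution bounds for $B^{ij}\ast\FM^{1/2}$ from \cite{Guo-L,SG}, which gain polynomial decay because $\FM^{1/2}$ is Gaussian and $\ga+2\le 0$. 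One must check that the total velocity weight generated does not exceed what $\trn u\trn_{N,\ell-1}^2$ supplies — again tied to $\ell-1\ge\frac{\ell_0}{2}-\frac1{\ga+2}$ — and that the extra $x$-derivative from the outer $\pa_i$ (for $|\al|\le 1$, plus one more from $\pa^\al$) is absorbed because we have $N\ge 4$ derivatives available, leaving enough room to apply Sobolev embedding $H^2(\R^3_x)\hookrightarrow L^\infty_x$ on the lower-order factor while keeping the other in $L^2_x$. The resulting bound is quadratic in the energy, hence $\lesssim\CE_{N,\ell-1}(t)$.

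The main obstacle I anticipate is the bookkeeping of the velocity weights in the $\Ga$ term: one must verify that for every term in the expansion of $\pa^\al\Ga(u,u)$ the combined polynomial-in-$\lag\xi\rag$ weight splits as a product of two weights each dominated by the algebraic part of $w_{|\be|-\ell+1,\la}$ with $|\be|\le N$, so that both factors are genuinely controlled by $\trn u\trn_{N,\ell-1}^2$; the constraint $\ell-1\ge\max\{N,\tfrac{\ell_0}{2}-\tfrac1{\ga+2}\}$ is precisely what makes this possible, and (as in Lemma \ref{lem.pu}) it reduces to an elementary inequality $2+(\ga+2)+(\ga+2)(|\be'|-|\be|)\ge 0$ type condition that holds because $-3\le\ga<-2$. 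Everything else (the Cauchy--Schwarz splittings, the convolution estimates, the Sobolev embeddings, the neutrality-based bound on $\|\na_x\phi\|$) is routine and already available from \cite{DYZ-s}, so the details can reasonably be omitted with a pointer to that reference, as the authors indicate.
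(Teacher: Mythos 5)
Your overall strategy coincides with the paper's: Cauchy--Schwarz in $x$ to split the $Z_1$ norm into $\|\na_x\phi\|$ times a velocity-weighted $L^2_{x,\xi}$ norm of $u$, the explicit representation of $\Ga(u,u)$ together with the pointwise convolution bound $|B^{ij}\ast[\FM^{1/2}\pa^{\al_1}u]|\lesssim \lag\xi\rag^{\ga+2}|\pa^{\al_1}u|_{-(\ell-1)}$, Sobolev embedding in $x$ on the lower-order factor, and weight bookkeeping that reduces to the hypothesis $\ell-1\geq\frac{\ell_0}{2}-\frac{1}{\ga+2}$. However, one step as you state it would fail: for the term $\na_x\phi\cdot\na_\xi u$ you propose to ``integrate by parts in $\xi$ to move $\na_\xi$ off $u$ and onto the weight.'' The quantity to be bounded is a norm ($Z_1$ or $L^2_{x,\xi}$ of $\lag\xi\rag^{-\frac{\ga+2}{2}\ell_0}\na_x\phi\cdot\na_\xi u$), not a pairing against a test function, so integration by parts in $\xi$ is not an available operation; there is no way to avoid estimating $\na_\xi u$ itself. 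The correct (and simpler) route, which is what the paper does, is to bound $\|w_{-\frac{\ell_0}{2}}\na_\xi u\|$ directly by $\CE_{N,\ell-1}(t)$: the energy contains all mixed derivatives $\pa_\be^\al u$ with $|\al|+|\be|\leq N$ carrying the weight $w_{|\be|-(\ell-1)}$, and $w_{-\frac{\ell_0}{2}}\lesssim w_{1-(\ell-1)}$ precisely because $\ell-1\geq 1+\frac{\ell_0}{2}$, which follows from the hypothesis since $-\frac{1}{\ga+2}\geq 1$ for $-3\leq\ga<-2$. The same remark makes your appeal to the neutrality condition for $\|\na_x\phi\|$ unnecessary: $\|\na_x\phi\|_{H^N}^2$ is part of $\trn u\trn_{N,\ell-1}^2$ by definition.

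A second, more minor, inaccuracy: in the expansion of $\pa^\al\Ga(u,u)$ the outer $\pa_i$ is a velocity derivative ($\pa_i=\pa_{\xi_i}$ in this paper), not an extra $x$-derivative, so the worst terms contain second-order velocity derivatives $\pa_i\pa_j\pa^{\al-\al_1}u$. These are still controlled because the energy contains mixed $x$--$\xi$ derivatives up to total order $N$, but the correct weight count pits $w_{-\frac{\ell_0}{2}+1}$ (the $+1$ coming from $\lag\xi\rag^{\ga+2}$ gained in the convolution) against $w_{2-(\ell-1)}$, i.e.\ one needs $-\frac{\ell_0}{2}+1\geq 2-(\ell-1)$; this is exactly the check the paper performs on the term $G_{\al_1}^{(2)}$, and it is again implied by the stated hypothesis. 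With these two corrections your argument matches the paper's proof.
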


\begin{proof}
Let $G_1=\frac{1}{2} \xi\cdot \na_x\phi u-\na_x\phi \cdot \na_\xi u$. Similar
to  \cite{DYZ-s}, as long as $\ell-1\geq \max\left\{N,\frac{\ell_0}{2}-\frac{1}{\ga+2}\right\}$ and $-3\leq \ga<-2$, we have
\begin{equation*}
   \left\|\lag \xi \rag^{-\frac{\ga+2}{2}\ell_0} G_1(t)\right\|_{Z_1} \leq C \|\na_x\phi\|\left\{\left\|w_{-\frac{1}{2}\ell_0}(t,\xi)\na_\xi u\right\|+\left\|w_{-\frac{1}{2}\ell_0+\frac{1}{\ga+2}}(t,\xi) u\right\|\right\}\leq C\CE_{N,\ell-1}(t)
\end{equation*}
and
\begin{equation*}
   \sum_{|\al|\leq 1}\left\|\lag \xi \rag^{-\frac{\ga+2}{2}\ell_0} \pa^\al G_1(t)\right\|\\
\leq C \|\na_x\phi\|_{H^3} \sum_{|\al|+|\be|\leq 2} \left\|w_{|\be|-(\ell-1)}(t,\xi)\pa_\be^\al u\right\| \leq C\CE_{N,\ell-1}(t).
\end{equation*}
Corresponding to $\Ga(u,u)$, for $|\al|\leq 1$,
\begin{eqnarray*}
&\dis \lag \xi \rag^{-\frac{\ga+2}{2}\ell_0}  \pa^\al \Ga(u,u)=w_{-\frac{\ell_0}{2}}(t,\xi)\pa^\al \Ga(u,u)=\sum_{|\al_1|\leq |\al|} C_{\al_1}^\al G_{\al_1}
\end{eqnarray*}
with
\begin{eqnarray*}
  G_{\al_1} &=& w_{-\frac{\ell_0}{2}}\left\{B^{ij}\ast\pa_i\left[\FM^{1/2}\pa^{\al_1}u\right]\right\}\pa_j \pa^{\al-\al_1}u
  +w_{-\frac{\ell_0}{2}}\left\{B^{ij}\ast\left[\FM^{1/2}\pa^{\al_1}u\right]\right\}\pa_i\pa_j \pa^{\al-\al_1}u\\
  &&-w_{-\frac{\ell_0}{2}}\left\{B^{ij}\ast\left[\frac{\xi_i}{2}\FM^{1/2} \pa^{\al_1}u\right]\right\}\pa_j \pa^{\al-\al_1}u\\
  &&-w_{-\frac{\ell_0}{2}}\left\{B^{ij}\ast \pa_i\left[\FM^{1/2}\pa_j \pa^{\al_1}u\right]\right\} \pa^{\al-\al_1}u
  -w_{-\frac{\ell_0}{2}}\left\{B^{ij}\ast \left[\FM^{1/2}\pa_j \pa^{\al_1}u\right]\right\} \pa_i\pa^{\al-\al_1}u \\
  &&+w_{-\frac{\ell_0}{2}}\left\{B^{ij} \ast \left[\frac{\xi_i}{2}\FM^{1/2}\pa_j \pa^{\al_1}u\right]\right\} \pa^{\al-\al_1}u,
\end{eqnarray*}
where the Einstein summation convention for repeated indices has been used.
We estimate the second term $G_{\al_1}^{(2)}$ of $G_{\al_1}$ first. By the Cauchy-Schwarz inequality,
\begin{eqnarray*}
&&\left\{B^{ij}\ast\left[\FM^{1/2}\pa^{\al_1}u\right]\right\}\leq \left[\left|B^{ij}\right|^2\ast \FM^{1/2}\right]^{1/2}\left\|\FM^{1/4}\pa^{\al_1}u\right\|_{L^2_\xi}\leq C \lag \xi \rag^{\ga+2}|\pa^{\al_1} u|_{-(\ell-1)},
\end{eqnarray*}
so that
\begin{eqnarray*}
&&\left|G_{\al_1}^{(2)}\right|\leq Cw_{-\frac{\ell_0}{2}+1}\left|\pa^{\al_1} u\right|_{-(\ell-1)}\left|\pa_i\pa_j \pa^{\al-\al_1}u\right|.
\end{eqnarray*}
This implies
\begin{multline*}
\left\|G_{\al_1}^{(2)}\right\|_{Z_1}+\left\|G_{\al_1}^{(2)}\right\|\leq C \left\|w_{-\frac{\ell_0}{2}+1}\|\pa^{\al_1} u\|_{-(\ell-1)}\|\pa_i\pa_j \pa^{\al-\al_1}u\|_{L^2_x}\right\|_{L^2_\xi}\\
+C \sup_{x\in \R^3}|\pa^{\al_1} u|_{-(\ell-1)}\times\left\|w_{-\frac{\ell_0}{2}+1}\pa_i\pa_j \pa^{\al-\al_1}u\right\|_{L^2_{x,\xi}}\leq C\CE_{N,\ell-1}(t),
\end{multline*}
because $-\frac{\ell_0}{2}+1\geq 2-(\ell-1)$ which is equivalent to $\ell-1\geq \frac{\ell_0}{2}+1$. It is direct to verify that for all other terms of $G_{\al_1}$, the same estimate still holds and hence it proves \eqref{lem.G.1} for the part $\Ga(u,u)$ in $G$. Thus, the proof of lemma  is completed.
\end{proof}

For later use, we also need the time-decay property of the linearized Landau system
\begin{equation*}
\pa_t u +\xi \cdot \na_x u -\na_x \phi \cdot \xi \FM^{1/2} =\FL u
\end{equation*}
with initial data given by $u_0(x,\xi)$, where $\phi$ is defined by \eqref{def.phi}. Denote the solution operator by $\semiG(t)$.

\begin{lemma}\label{thm.lide}
 Set $\mu=\mu(\xi):=\lag \xi \rag^{-\frac{{\ga+2}}{2}}$. Let $-3\leq \ga<-2$, $\ell\geq 0$, $\ell_0>3/2$, $\al\geq 0$, $m=|\al|$, and $\si_{m}=\frac{3}{4}+\frac{m}{2}$.  Assume
\begin{equation*}
    \int_{\R^3}a_0\,dx=0,\quad \int_{\R^3}(1+|x|)|a_0|\,dx<\infty,
\end{equation*}
and
\begin{equation*}
   \left\|\mu^{\ell+\ell_0}u_0\right\|_{Z_1}+\left\|\mu^{\ell+\ell_0} \pa^\al u_0\right\|<\infty.
\end{equation*}
Then, the evolution operator $\semiG(t)$ satisfies
\begin{multline*}
\left\|\mu^{\ell}\pa^\al \semiG(t)u_0\right\| +\left\|\pa^\al \na_x\De_x^{-1}\FP_0 \semiG(t) u_0\right\|\\
\leq C(1+t)^{-\si_{m}} \left(\left\|\mu^{\ell+\ell_0}u_0\right\|_{Z_1}+\left\|\mu^{\ell+\ell_0} \pa^\al u_0\right\|+\left\|(1+|x|)a_0\right\|_{L^1_x}\right)
\end{multline*}
for any $t\geq 0$, where $\FP_0$ is the projection given by $\FP_0 u=a(t,x)\FM^{1/2}$.
\end{lemma}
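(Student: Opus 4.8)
The plan is to take the Fourier transform in $x$ and analyze the frequency-localized semigroup, splitting the frequency domain into a low- and a high-frequency part. Writing $v(t,k,\xi)$ for the $x$-Fourier transform of $\semiG(t)u_0$, the linearized system reduces, for each fixed $k\in\R^3$, to
\[
\pa_t v+i(k\cdot\xi)v+\FL v=-\frac{i(k\cdot\xi)\FM^{1/2}}{|k|^2}\int_{\R^3}\FM^{1/2}v\,d\xi ,
\]
and the $x$-Fourier transform of $\na_x\De_x^{-1}\FP_0\semiG(t)u_0$ equals $-\tfrac{ik}{|k|^2}\FM^{1/2}\int_{\R^3}\FM^{1/2}v\,d\xi$. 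By Plancherel $\|g\|=\|\hat g\|$ and $\|\hat g\|_{L^\infty_k}\le\|g\|_{L^1_x}$, both quantities in the statement are controlled by weighted $L^2_k$-norms of $v$ and of $|k|^{-1}\!\int\FM^{1/2}v\,d\xi$, and the operator $\pa^\al$ just inserts the multiplier $(ik)^\al$, i.e.\ a factor $|k|^{m}$. I would estimate $v(t,k,\cdot)$ separately on $|k|\ge r_0$ and on $|k|\le r_0$ for a small fixed $r_0>0$.

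On the high-frequency region $|k|\ge r_0$ the Poisson feedback is a lower-order term (it involves only the zeroth velocity moment of $v$ and carries $|k|^{-2}\le r_0^{-2}$), controlled by the macroscopic dissipation, which for $|k|\ge r_0$ comes with a coefficient bounded away from $0$; modulo this it remains to treat the linearized Landau semigroup. Since $-3\le\ga<-2$ is a soft potential, $\FL$ has no spectral gap and no exponential decay is available even for $|k|$ bounded away from $0$, so the decay must be produced by the time-velocity weighted energy method of \cite{SG}. Concretely, I would run a weighted estimate for $v(t,k,\cdot)$ built on the velocity weight $\mu^{2\ell}$, using Lemma~\ref{lem.FL} and the coercivity \eqref{est.l}, to get a Lyapunov inequality $\tfrac{d}{dt}E_\ell(t,k)+\ka\,D_\ell(t,k)\le0$, where $E_\ell\sim|\mu^\ell v|^2$ (with $|\cdot|$ the $L^2_\xi$-norm at fixed $k$) and $D_\ell$ dominates a $\mu^\ell$-weighted copy of the $\si$-dissipation together with a $\tfrac{|k|^2}{1+|k|^2}$-weighted copy of the macroscopic part. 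Because $\ga+2<0$, $D_\ell$ is weaker than $E_\ell$ at large velocities; cutting the velocity integral at $|\xi|=R$ and using $\mu^{-2\ell_0}=\lag\xi\rag^{(\ga+2)\ell_0}\to0$ as $|\xi|\to\infty$ gives $E_\ell\le C_R\,D_\ell+\eps(R)\,|\mu^{\ell+\ell_0}v|^2$ with $\eps(R)\downarrow 0$, and optimizing $R$ against the (bootstrapped, bounded) quantity $|\mu^{\ell+\ell_0}v(t,k,\cdot)|^2$ yields a closed differential inequality of the type $\tfrac{d}{dt}E_\ell\lesssim-E_\ell^{1+1/\ell_0}$, hence algebraic decay $E_\ell(t,k)\lesssim(1+t)^{-\ell_0}\,|\mu^{\ell+\ell_0}v_0(k,\cdot)|^2$. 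Multiplying by $|k|^{2m}$ and integrating over $|k|\ge r_0$ converts the data into $\|\mu^{\ell+\ell_0}\pa^\al u_0\|$, and the hypothesis $\ell_0>3/2$ is what makes this rate no worse than $\si_m$.

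On the low-frequency region $|k|\le r_0$ I would carry out the spectral analysis of $\mathcal{L}(k)=-i(k\cdot\xi)-\FL+(\text{Poisson feedback})$ near $k=0$, as in the Vlasov--Poisson--Boltzmann analysis behind \cite{DYZ,DYZ-s}. The Poisson coupling introduces the plasma oscillation: the relevant branches of $\mathrm{spec}\,\mathcal{L}(k)$ are of the form $\la(k)=\pm i+O(|k|)$ together with a heat-type real branch $\sim-c|k|^2$, all with $\rmre\,\la(k)\lesssim-|k|^2$, so the frequency-localized semigroup behaves like $e^{-c|k|^2t}$ up to oscillatory factors of modulus one. The delicate point is the factor $|k|^{-1}$ carried by the Poisson feedback (hence by the electric-field amplitude): it is removed by the hypotheses, since the total charge $\hat a(t,0)=\iint\FM^{1/2}u\,dxd\xi$ is conserved by the linearized flow and vanishes by assumption, so $\hat a(t,k)=\hat a(t,k)-\hat a(t,0)$ and $|k|^{-1}|\hat a(t,k)|\lesssim\|(1+|\cdot|)a(t)\|_{L^1_x}$, the first moment being propagated from the datum $\|(1+|x|)a_0\|_{L^1_x}$. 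Once the singularity is removed, integrating $|k|^{2m}e^{-c|k|^2t}$ over $|k|\le r_0$ produces exactly $(1+t)^{-3/4-m/2}=(1+t)^{-\si_m}$, while $\|\hat g\|_{L^\infty_k}\le\|g\|_{L^1_x}$ turns the remaining data into the $Z_1$-norms on the right-hand side.

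I expect the main obstacle to be the interplay of these two mechanisms: in the low-frequency zone one must extract the $|k|^{-1}$ from the Poisson feedback and absorb it using the conserved vanishing total charge together with the propagated first $x$-moment of $a$, while in the intermediate-to-high frequency zone the soft Landau operator rules out exponential decay and forces the interpolation argument with the extra weight $\mu^{\ell_0}$; one then has to check that the two regimes patch together at $|k|\sim r_0$. The computational heart of the proof is keeping track of how the velocity weights $\mu^{\ell}$ and $\mu^{\ell+\ell_0}$ propagate through the weighted energy estimate and verifying that $\ell_0>3/2$ is enough to reach $\si_m$; once that is settled, combining the two regions delivers the asserted decay for both $\mu^\ell\pa^\al\semiG(t)u_0$ and the electric field $\pa^\al\na_x\De_x^{-1}\FP_0\semiG(t)u_0$.
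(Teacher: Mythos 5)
Your overall framework (Fourier transform in $x$, weighted energy estimates, moment interpolation to compensate the lack of a spectral gap, and use of $\int a_0\,dx=0$ plus the first moment of $a_0$ to kill the $|k|^{-1}$ Poisson singularity) is in the spirit of the proof the paper invokes from \cite{DYZ-s,St-Op,SG}, but the low-frequency half of your argument rests on a step that fails. For $-3\le\ga<-2$ the linearized Landau operator has no spectral gap: by Lemma \ref{lem.FL} the dissipation only controls $|\lag\xi\rag^{\frac{\ga+2}{2}}\{\FI-\FP\}u|^2$, the coercivity degenerates as $|\xi|\to\infty$, and the essential spectrum of $-\rmi\,\xi\cdot k-\FL$ (with or without the Poisson feedback) reaches the imaginary axis. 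Hence there is no Ellis--Pinsky-type expansion into isolated branches with $\rmre\,\la(k)\lesssim-|k|^2$, and no exponential decay $e^{-c|k|^2t}$ at any fixed $k$; exactly for this reason the cited proof runs the time-velocity--weighted Fourier energy method \emph{uniformly in $k$}: a Lyapunov inequality whose (degenerate) dissipation carries the coefficient $\rho(k)=|k|^2/(1+|k|^2)$, moment interpolation against the extra weight $\mu^{\ell_0}$ turning it into algebraic frequency-localized decay of the type $(1+\rho(k)t)^{-\ell_0}$ for the squared norm, and then the $k$-integration of $|k|^{2m}(1+\rho(k)t)^{-\ell_0}$ over $|k|\le 1$, with $\sup_k|\hat u_0(k,\cdot)|$ bounded by the $Z_1$-norm, is what produces $(1+t)^{-2\si_m}$ and is where the largeness condition on $\ell_0$ is actually consumed. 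Your proposal instead attributes $\ell_0>3/2$ to matching the high-frequency rate, but by your own interpolation the high-frequency piece only decays like $E_\ell(t,k)\lesssim(1+t)^{-\ell_0}$, i.e.\ $(1+t)^{-\ell_0/2}$ for the norm, and the factor $|k|^{2m}$ buys no extra time decay there; so the assertion that this is ``no worse than $\si_m$'' is unjustified once $m\ge1$, and the mechanism that delivers the $m$-dependence of the rate (derivatives paying off only at low frequency, through $\rho(k)$) is missing from your accounting.

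A second, smaller gap is the treatment of the $|k|^{-1}$ singularity: you invoke ``the first moment being propagated from the datum,'' but the first moment of $a(t)$ is not conserved --- from $\pa_t a+\na_x\cdot b=0$ one gets $\frac{d}{dt}\int x\,a\,dx=\int b\,dx$, so $\|(1+|x|)a(t)\|_{L^1_x}$ need not stay bounded and cannot be used at positive times without further argument. In the proof the paper refers to (as in \cite{DS-VPB,DYZ-s}), the quantity $|\hat a(t,k)|^2/|k|^2$ (equivalently the field energy $\|\na_x\phi\|^2$) is built into the frequency-localized Lyapunov functional itself, so that only its \emph{initial} value is needed, and that is bounded by $\|(1+|x|)a_0\|_{L^1_x}$ via $\hat a_0(0)=0$ and the mean value theorem in $k$. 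With the energy method run uniformly in $k$ there is also no patching at $|k|\sim r_0$ to worry about; the only Landau-specific changes relative to \cite{DYZ-s} are the definition $\mu=\lag\xi\rag^{-\frac{\ga+2}{2}}$ dictated by Lemma \ref{lem.FL} and the weighted coercivity \eqref{est.l}.
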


The proof of Lemma \ref{thm.lide} is completely the same as  the one in \cite{DYZ-s} and thus it is omitted for brevity. In fact, what we have changed in the Landau case is the definition of $\mu(\xi)$ due to the coercivity estimate
\begin{equation*}
    \lag \FL u, u\rag \gtrsim |\{\FI-\FP\} u|_\si^2
\end{equation*}
and the property
\begin{equation*}
   |\{\FI-\FP\} u|_\si^2 \gtrsim |\lag \xi \rag ^{\frac{\ga+2}{2}} \{\FI-\FP\} u|^2= |\mu^{-1} \{\FI-\FP\} u|^2.
\end{equation*}
In addition, to prove
Lemma \ref{thm.lide}, we need to use the weighted estimates on the linearized Landau operator $\FL$ in terms of \eqref{est.l}; see also \cite{SG} and \cite{St-Op}.

\section{Global existence}\label{sec4}

In the last section, we will give the proof of the main theorem in this paper.
\\

\noindent{\bf Proof of Theorem 1.1.} The proof is divided into
the following three steps.

\medskip
\noindent{\bf Step 1.}  Suppose that $u(t,x,\xi)$ is a smooth solution to the Cauchy problem \eqref{rcp} satisfying that
$\widetilde{X}_{N,\ell,\la}(t)$ is small  in $0\leq t<T$ for a given $T>0$, where parameters $N,\ell,\la,\vth$ are to be determined to ensure that this
smallness holds global in time. Let $-3\leq \ga<-2$, $N\geq 8$, $\ell\geq N$, $0<\la\ll 1$, and $\vth>0$. Then, we claim that there is $\CE_{N,\ell,\la}(t)$ such that for $0\leq t<T$,
\begin{equation}\label{egy.ineq}
    \frac{d}{dt} \CE_{N,\ell,\la}(t)+\kappa \CD_{N,\ell,\la}(t)\leq 0,
\end{equation}
and also  there is a high order energy functional $\CE_{N,\ell,\la}^{\rm h}(t)$ satisfying
\begin{equation}\label{def.eh}
  \CE_{N,\ell,\la}^{\rm h}(t) \sim\sum_{|\al|+|\be|\leq N}\left\|\pa_\be^\al \{\FI-\FP\}u(t)\right\|_{|\be|-\ell,\la}^2+\left\|\na_x^2\phi\right\|_{H^{N-1}}^2
+\|a\|^2+ \sum_{|\al|\leq N-1}\left\|\na_x\pa^\al (a,b,c)\right\|^2,
\end{equation}
such that for $0\leq t<T$,
\begin{equation}\label{lem.ee.h.1}
    \frac{d}{dt} \CE_{N,\ell,\la}^{\rm h}(t)+\kappa \CD_{N,\ell,\la}(t)\leq C\|\na_x (a,b,c)\|^2.
\end{equation}
Note that this implies that
\begin{equation}\label{lem.X.pp4}
  \sup_{0\leq s\leq t}\CE_{N,\ell,\la}(s)\leq \CE_{N,\ell,\la}(0).
\end{equation}

\begin{proof}[Proof of \eqref{egy.ineq}]
First notice that the a priori assumption implies
\begin{equation}\label{phi.de}
    \|\na_x^2\phi\|_{H^{N-1}}\leq \frac{C \de}{(1+t)^\vth},
\end{equation}
where $\de>0$ is sufficiently small. Then, with the help of \eqref{est.l}, \eqref{est.lv}, and Lemmas \ref{lem.ga}, \ref{lem.xiu} and \ref{lem.pu}, following the argument in \cite{DYZ-s}, one can obtain  \eqref{egy.ineq} by standard energy
method. Here, we only point out the key points in the proof. As in \cite{Guo-L}, regarding the estimates on the linear transport terms, we can use
\begin{equation*}
w_{|\be|-\ell, \la}^2(t,\xi)=w_{|\be|+\frac{1}{2}-\ell, \la}(t,\xi)w_{|\be_2|+\frac{1}{2}-\ell, \la}(t,\xi)
\leq C \lag \xi\rag^{\frac{\ga+2}{2}} w_{|\be|-\ell, \la}(t,\xi)\cdot \lag \xi\rag^{\frac{\ga+2}{2}} w_{|\be_2|-\ell, \la}(t,\xi),
\end{equation*}
where $\be=\be_1+\be_2$, $|\be_1|=1$. Moreover, due to the time-dependent exponential factor $\exp\{\la \lag \xi \rag^2/(1+t)^\vth\}$ in the velocity weight function, the weighted energy estimates indeed generate the last part of the energy dissipation rate given in \eqref{def.ed}. And this part can be used to absorb those nonlinear terms with possible velocity growth that appear on the right-hand of \eqref{lem.xiu.1}-\eqref{lem.xiu.3} and \eqref{lem.pu.1}-\eqref{lem.pu.2} by using \eqref{phi.de}. In addition, to obtain the dissipation of the macroscopic component $(a,b,c)$ in $\CD_{N,\ell,\la}(t)$, one can apply  the following fluid-type moment system, cf. \cite{Guo-L},
\begin{equation*}
    \left\{\begin{array}{l}
       \dis \pa_t a + \na_x\cdot b =0,\\
\dis \pa_t b+ \na_x (a + 2c) + \na_x \cdot \Theta (\{\FI-\FP\} u) -\na_x\phi=\na_x\phi a,\\
\dis \pa_t c + \frac{1}{3}\na_x\cdot b +\frac{5}{3} \na_x\cdot  \Lambda (\{\FI-\FP\} u)=\frac{1}{3} \na_x\phi \cdot b,\\
\dis \De_x \phi =a,
    \end{array}\right.
\end{equation*}
and
\begin{equation*}
    \left\{\begin{array}{l}
\dis \pa_t \Theta_{ij}(\{\FI-\FP\} u)+\pa_{x_i} b_j +\pa_{x_j} b_i -\frac{2}{3} \de_{ij} \na_x\cdot b -\frac{10}{3} \de_{ij}\na_x\cdot \Lambda (\{\FI-\FP\} u)\\
\dis \qquad\qquad =\Theta_{ij} (r+G)-\frac{2}{3}\de_{ij}\na_x\phi \cdot b,\\
\dis \pa_t \Lambda_i (\{\FI-\FP\} u) +\pa_i c =\Lambda_i(r+G),
    \end{array}\right.
\end{equation*}
with
$
    r=-\xi\cdot \na_x \{\FI-\FP\}u+\FL u$ and $G$ given in Lemma \ref{lem.G}, where as in \cite{DS-VPB}, we use the notations
\begin{equation*}
    \Theta_{ij}(u)=\int_{\R^3}(\xi_i\xi_j -1)\FM^{1/2} u\,d\xi,\ \ \Lambda_i(u)=\frac{1}{10}\int_{\R^3} (|\xi|^2-5)\xi_i\FM^{1/2} u\,d\xi.
\end{equation*}
Finally, as in \cite{DYZ-s},  $\CE_{N,\ell,\la}(t)$ can be defined by
\begin{multline*}
 \CE_{N,\ell,\la}(t)
 =M_3\left[M_2\left\{\frac{M_1}{2}\left[\sum\limits_{|\al|\leq N}\left(\left\|\pa^\al u\right\|^2+\left\|\pa^\al \na_x\phi\right\|^2\right)-{\displaystyle\int_{\R^3}} |b|^2(a+2c)\,dx\right]+\CE_N^{\rm int}(t)\right\}\right.\\
\left.+\left\|\{\FI-\FP\} u\right\|_{-\ell,\la}^2+\sum\limits_{1\leq |\al|\leq N}\left\|\pa^\al u\right\|_{-\ell,\la}^2\right]+\sum\limits_{m=1}^NC_m\sum\limits_{
|\al|+|\be|\leq N, |\be|= m}\left\|\pa_\be^\al \{\FI-\FP\} u\right\|_{|\be|-\ell,\la}^2
\end{multline*}
together with
\begin{multline*}
  \CE^{{\rm int}}_{N}(t) =\sum_{|\al|\leq N-1}\int_{\R^3}\na_x\pa^\al c\cdot \Lambda(\pa^\al\{\FI-\FP\}u)\,dx -\kappa\sum_{|\al|\leq N-1}\int_{\R^3}\pa^\al a \pa^\al\na_x\cdot b \,dx\\
  +\sum_{|\al|\leq N-1}\sum_{i,j=1}^3\int_{\R^3}
\left(\pa_{x_i}\pa^\al b_j+\pa_{x_j}\pa^\al b_i-\frac{2}{3}\de_{ij}\na_x\cdot \pa^\al b\right)
\Theta_{ij}(\pa^\al\{\FI-\FP\}u)\,dx,
\end{multline*}
where $0<\kappa<1$ is a small constant, $C_m>0$ $(1\leq m\leq N)$ are properly chosen constants, and $M_i$ $(i=1,2,3)$ are constants large enough.
\end{proof}

\begin{proof}[Proof of \eqref{lem.ee.h.1}]
Compared with \eqref{egy.ineq}, the main difference in the proof of \eqref{lem.ee.h.1} comes from the estimation on $\|\{\FI-\FP\}u\|$ and $\|\{\FI-\FP\}u\|_{-\ell,\la}$. Notice that
the time evolution of $\{\FI-\FP\}u$ satisfies
\begin{multline*}
\pa_t \{\FI-\FP\}u+\xi \cdot \na_x \{\FI-\FP\}u + \na_x\phi \cdot \na_\xi \{\FI-\FP\}u
+\FL\{\FI-\FP\}u\\
=\Ga(u,u)+\frac{1}{2} \xi \cdot \na_x \phi \{\FI-\FP\}u +\comml \FP, \CT_{\phi}\commr u,
\end{multline*}
where $\comml A, B\commr=AB-BA$ denotes the commutator of two operators $A,B$, and $\CT_\phi$ is given by
\begin{equation*}
    \CT_\phi=\xi \cdot \na_x  +\na_x\phi \cdot \na_\xi  -\frac{1}{2} \xi \cdot \na_x\phi.
\end{equation*}
Therefore, \eqref{lem.ee.h.1} follows from the similar weighted energy estimates  as in \cite{DS-VMB}.
\end{proof}

\medskip
\noindent{\bf Step 2.} By Lemma \ref{lem.G}, it is straightforward to verify
\begin{equation}\label{lem.made.1}
  \|(a,b,c)(t)\|+\|\na_x\phi(t)\|\leq C(1+t)^{-\frac{3}{4}}\left \{Y_{N,\ell}(0)+\widetilde{X}_{N,\ell}(t)\right\}
\end{equation}
and
\begin{equation}\label{lem.made.2}
  \|\na_x(a,b,c)(t)\|\leq C(1+t)^{-\frac{5}{4}} \left\{Y_{N,\ell}(0)+\widetilde{X}_{N,\ell}(t)\right\}
\end{equation}
for $0\leq t<T$. Then, as in \cite{DYZ-s}, by the time-weighted estimate, we have from \eqref{egy.ineq} and \eqref{lem.made.1} that
\begin{equation}\label{ap.td1}
    \sup_{0\leq s\leq t} (1+s)^{\frac{3}{2}} \CE_{N,\ell-1,\la}(s)\leq C \left\{Y_{N,\ell,\la}(0)^2+\widetilde{X}_{N,\ell}(t)^2\right\}.
\end{equation}
Here, notice that we have used
\begin{equation*}
    \lag \xi \rag^{\frac{\ga+2}{2}} w_{|\be|-\ell,\la}(t,\xi)\sim w_{|\be|+\frac{1}{2}-\ell,\la}(t,\xi),
\end{equation*}
so that
\begin{equation*}
  \CD_{N,\ell,\la}(t)+ \|(b,c)(t)\|^2+\|\na_x\phi(t)\|^2\gtrsim \CE_{N,\ell-\frac{1}{2},\la}(t).
\end{equation*}

\medskip
\noindent{\bf Step 3.} Observe that
\begin{equation*}
    \min_{\xi\in\R^3}\left\{\lag \xi\rag^{\ga+2},\frac{\lag \xi\rag^2}{(1+t)^{1+\vth}}\right\}=(1+t)^{\frac{(1+\vth)(\ga+2)}{-\ga}},
\end{equation*}
where the equality is taken when $\lag \xi\rag^{\ga+2}=\lag \xi\rag^2/(1+t)^{1+\vth}$, i.e. $\lag \xi\rag=(1+t)^{\frac{1+\vth}{-\ga}}$. Set $p=1+\frac{(1+\vth)(\ga+2)}{-\ga}$. Then, by the choice  of $\vth$ given in Theorem \ref{thm.m}, since $-3\leq \ga<-2$,
one has $1/14\leq \vth<1/4$ and $p=\frac{1}{2}+2\vth$ with $0<p<1$. Therefore, recalling \eqref{def.eh} for the equivalent property of $\CE_{N,\ell,q}^{\rm h}(t)$, it follows from \eqref{lem.ee.h.1}  that
\begin{equation*}
    \frac{d}{dt}\CE_{N,\ell,\la}^{\rm h}(t)+\kappa p (1+t)^{p-1} \CE_{N,\ell,\la}^{\rm h}(t)\leq C\|\na_x (a,b,c)\|^2,
\end{equation*}
which after multiplying it by $e^{\kappa (1+t)^p}$ and taking the time integration over $[0,t]$, implies
\begin{equation*}
   \CE_{N,\ell,\la}^{\rm h}(t)\leq \CE_{N,\ell,\la}^{\rm h}(0)e^{-\kappa (1+t)^p}
+Ce^{-\kappa (1+t)^p}\int_0^t \|\na_x (a,b,c)(s)\|^2 e^{\kappa (1+s)^p}\,ds.
\end{equation*}
Using \eqref{lem.made.2} and
\begin{equation*}
   \int_0^t (1+s)^{-\frac{5}{2}} e^{\kappa (1+s)^p}\,ds\leq C (1+t)^{-(\frac{3}{2}+p)}  e^{\kappa (1+t)^p},
\end{equation*}
one has
\begin{equation*}
   \CE_{N,\ell,\la}^{\rm h}(t)\leq C(1+t)^{-2(1+\vth)} \left\{\CE_{N,\ell,\la}^{\rm h}(0)+Y_{N,\ell,\la}(0)^2+\widetilde{X}_{N,\ell}(t)^2\right\}.
\end{equation*}
Noticing $\left\|\na_x^2\phi \right\|_{H^{N-1}}^2\leq C  \CE_{N,\ell,\la}^{\rm h}(t) $, the above inequality implies
\begin{equation}\label{ap.tdh}
\sup_{0\leq s\leq t} (1+s)^{2(1+\vth)}\|\na_x^2\phi (s)\|_{H^{N-1}}^2\\
\leq C\sup_{0\leq s\leq t} (1+s)^{2(1+\vth)} \CE_{N,\ell,\la}^{\rm h}(t)\leq C \left\{Y_{N,\ell,\la}(0)^2+\widetilde{X}_{N,\ell}(t)^2\right\}.
\end{equation}
Recall \eqref{def.x} and notice $\widetilde{X}_{N,\ell}(t)\leq \widetilde{X}_{N,\ell,\la}(t)$. \eqref{ap.tdh} together with \eqref{lem.X.pp4} and \eqref{ap.td1} then prove \eqref{est.X} which is equivalent to \eqref{thm.ge.2}. Hence,
it completes the proof of Theorem \ref{thm.m}. \qed

\medskip
\noindent
{\bf Acknowledgements:}
The research of the first author was supported by the General Research Fund (Project No.: CUHK 400511) from RGC of Hong Kong. The research of the second author was supported by the General Research Fund of Hong Kong, CityU No. 104511, and the Croucher Foundation. And research of the third author was supported by the grants
from the National Natural Science Foundation of China under
contracts 10871151 and 10925103. This work is also
supported by ``the Fundamental Research Funds for the Central
Universities".

\end{document}